\def\alex#1{
{\color{black}#1}
}
\def\nic#1{
{\color{black}#1}
}
\def\eli#1{
{\color{black}#1}
}
\newtheorem{theorem}{Theorem}[section]
\newcommand{\parphir}{{\phi_r}}
\newcommand{\parphid}{{\phi_d}}
\newcommand{\parphie}{{\phi_e}}
\newcommand{\parbetai}{{\beta_i}}
\newcommand{\parbetae}{{\beta_e}}
\newtheorem{remark}{Remark}
\begin{document}
\title{Delay differential equations for the spatially-resolved simulation of epidemics with specific application to COVID-19}
\author[1]{Nicola Guglielmi}
\author[2]{Elisa Iacomini}
\author[1]{Alex Viguerie}

\affil[1]{Gran Sasso Science Institute, Viale F. Crispi 7, L`Aquila, AQ 67100, Italy}
\affil[2]{ Institut für Geometrie und Praktische Mathematik (IGPM), RWTH Aachen University\\
	
	 Templergraben 55, 52062 Aachen, Germany}

\date{ }
\maketitle

\begin{abstract}
In the wake of the 2020 COVID-19 epidemic, much work has been performed on the development of mathematical models for the simulation of the epidemic, and of disease models generally. Most works follow the \textit{susceptible-infected-removed} (SIR) compartmental framework, modeling the epidemic with a system of ordinary differential equations. Alternative formulations using a partial differential equation (PDE) to incorporate both spatial and temporal resolution have also been introduced, with their numerical results showing potentially powerful descriptive and predictive capacity. In the present work, we introduce a new variation to such models by using delay differential equations (DDEs). The dynamics of many infectious diseases, including COVID-19, exhibit delays due to incubation periods and related phenomena. Accordingly, DDE models allow for a natural representation of the problem dynamics, in addition to offering advantages in terms of computational time and modeling, as they eliminate the need for additional, difficult-to-estimate, compartments (such as exposed individuals) to incorporate time delays. \alex{In the present work,} we introduce a DDE epidemic model in both an ordinary- and partial differential equation framework. \alex{We present a series of mathematical results assessing the stability of the formulation. We then perform \eli{several} numerical experiments, validating both the mathematical results and establishing model's ability to reproduce measured data on realistic problems. } \end{abstract}
{\bf Keywords:} {
	Delay differential equations, partial differential equations, epidemiology, compartmental models, COVID-19, stability analysis.
}
\section{Introduction}\label{sec:intro}
The worldwide outbreak of COVID-19 in 2020 has caused unprecedented disruption, leading to massive damage in terms of both economic cost and human lives. Much of the economic damage in particular has been due to government efforts designed to retard the spread of the disease; while undoubtedly effective, the cost of such measures is enormous. In recent months, much research has focused on the mathematical modeling of the epidemic, and of epidemics generally, in the hope that such models may ultimately prove useful to decision-makers, and help to inform more targeted, less-disruptive interventions.
\par Many modeling approaches have been proposed, with some combining differential equations and empirical approaches in order to evaluate the effectiveness of various social-distancing measures \cite{Ferguson2020, Gatto202004978, GBB2020, LRGK2020}. In order to incorporate spatial variation across different regions, many of these models discretize various regions along geopolitical (or similar) lines, using a network structure to represent movement between the populations in different areas \cite{GBB2020, Gatto202004978, LRGK2020}. 
\par In contrast to these approaches, in \cite{VLABHPRYV2020, VLABHPRYV2020Due, jha2020bayesian,grave2020adaptive, BP2021, grave2021assessing}, the authors instead modeled the spatial diffusion of the disease using partial differential equation (PDE) models. The implemented models followed the compartmental framework, but also incorporated nonlinear heterogeneous diffusion terms, giving a reaction-diffusion system of equations. While the computational cost of such an approach is much higher than an ODE model, different numerical experiments showed that this approach carries several advantages. Notably, the timing of different dynamics across different areas can be resolved in a continuous manner, offering a richer description of the spatiotemporal evolution.
\par In the following, we propose an alternative formulation of the model introduced in \cite{VLABHPRYV2020} and analyzed further and extended in \cite{VLABHPRYV2020Due,jha2020bayesian,grave2020adaptive, BP2021}. In particular, we seek to model many of the dynamics, and in particular the incubation period, with a \textit{delay differential equation} model. Ordinary delay differential equation models have been extensively used for the study of epidemics, as well other types of biological models, such as predator-prey equations \cite{L2015,Kaddar2009OnTD,Bernoussi2014GlobalSO,forde2005delay,huang2011global,greenhalgh2017time}. Delayed models using partial differential equation (PDE) to study epidemics have been discussed and analyzed in e.g. \cite{smith2000global,pei2017traveling,yang2011travelling,wang2012traveling,ding2013traveling,zhao2018traveling}. Many of these works are restricted to mathematical analysis, though simple numerical tests were also carried out in \cite{zhao2018traveling,pei2017traveling}. A delay PDE simulation of a realistic problem over a nontrivial geometry has not, to the authors' knowledge, been carried out. 
\par There are several advantages in using a delay formulation rather than the system shown in \cite{VLABHPRYV2020}. Notably, the \textit{exposed} compartment, responsible for the incubation period, may be elimated without losing the relevant dynamics. For a PDE model in which problem size is relevant, obtaining the same dynamics with fewer compartments is desirable. However, we do not expect that the dynamics are exactly the same; we believe in fact that the delay-equation formulation may better capture the ``lag" effect incurred by the introduction of new measures (as seen during the COVID-19 pandemic), in which there is a delay of several days between the introduction of a new public health ordinance and when its effects are fist observed. These lags may also change depending on the epidemic stage, leading to \textit{state-dependent delays}. Though we will not examine such a case here, a thorough understanding of the constant-delay case is necessary first and is the objective of the present work. Such a formulation is also interesting from the mathematical and computational point of view, and examining such a  model is worthwhile, we believe, in and of itself.
\par This paper is outlined as follows. We begin by recalling the model shown in \cite{VLABHPRYV2020}, along with some of its basic properties and notation. We will then proceed to introduce the delay-equation formulation of the model for both ordinary and partial differential equation variants, highlighting important points of difference. Following this introduction, we will present several mathematical results of the delay-differential equation models including the equilibria solutions and a stability analysis. We will then perform a series of numerical examples using the ODE and an idealized 1D problem for the PDE (inspired by \cite{VLABHPRYV2020Due, grave2020adaptive}) to qualitatively analyze the model behavior and confirm the mathematical results. We finish our numerical tests with a simulation over the Italian region of Lombardy using real data, in order to validate the model's ability to reproduce real-life data on realistic problems, before concluding with several suggestions for future research in this area.

\section{Model}

\label{model}
The COVID-19 PDE model presented in \cite{VLABHPRYV2020} and further analyzed and extended in \cite{BP2021, VLABHPRYV2020Due, jha2020bayesian,grave2020adaptive, grave2021assessing} reads:
\begin{align}
\label{eq1} \partial_t s &= \alpha n - \left(1-A/n\right)\beta_i s i - \left(1-A/n\right)\beta_e s e - \mu s  + \nabla\cdot\left(n\, \nu_s \nabla s\right) \\
\label{eq2} \partial_t e &= \left(1-A/n\right)\beta_i si + \left(1-A/n\right)\beta_e se - \sigma e - \phi_e e - \mu e + \nabla\cdot\left(n\, \nu_e \nabla e \right) \\
\label{eq3} \partial_t i &= \sigma e - \phi_d \,i - \phi_r i - \mu i  + \nabla \cdot\left(n\, \nu_i \nabla i \right)\\
\label{eq4} \partial_t r &= \phi_r i + \phi_e e - \mu r  + \nabla \cdot\left(n\, \nu_r \nabla r \right) \\
\label{eq5} \partial_t d &= \phi_d\, i,
\end{align}
while a similar ODE variant, neglecting diffusion, reads:
\begin{align}
\label{eq1ODE} \dot{s} &= \alpha n - \beta_i s i - \beta_e s e - \mu s   \\
\label{eq2ODE} \dot{e} &= \beta_i si + \beta_e se - \sigma e - \phi_e e - \mu e \\
\label{eq3ODE} \dot{i} &= \sigma e - \phi_d \,i - \phi_r i - \mu i  \\
\label{eq4ODE} \dot{r} &= \phi_r i + \phi_e e - \mu r  \\
\label{eq5ODE} \dot{d} &= \phi_d\, i.
\end{align}

\begin{figure}\centering
  \includegraphics[width=\textwidth]{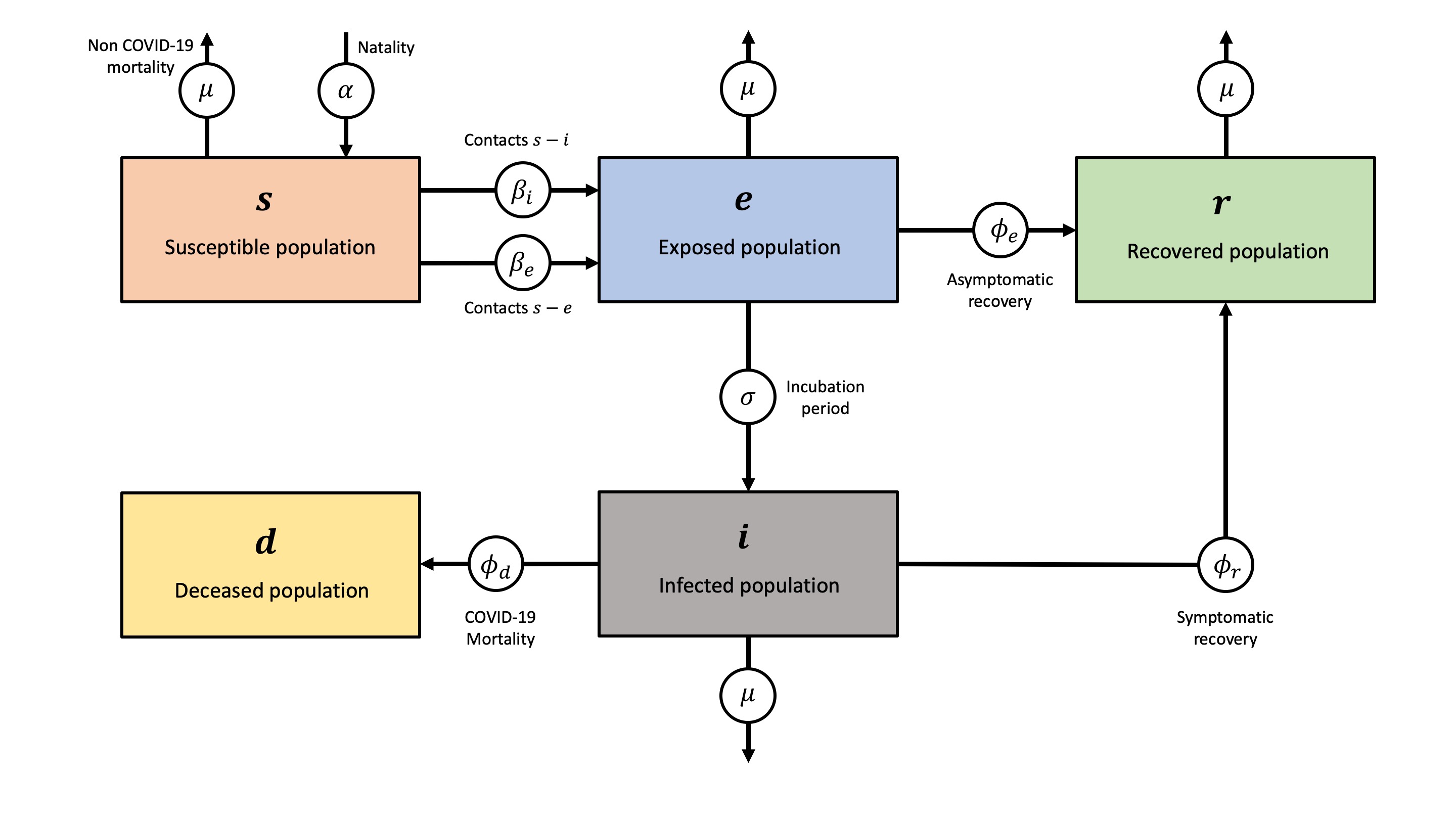}
 \caption{Flow chart describing the evolution of the various compartments and parameters in the model equations (\ref{eq1})-(\ref{eq5}).}\label{fig:flowChart}
\end{figure}

The mechanism of the model is diagrammed in Fig. \ref{fig:flowChart} and operates in the following way: the susceptible population $s$ is exposed to the disease by contact with exposed individuals in compartment $e$ or infected patients in compartment $i$ at rates $\beta_e$ and $\beta_i$, respectively. After an incubation period $\sigma$, exposed individuals develop symptoms and move to the infected subgroup $i$. A fraction of symptomatic patients recover at a rate $\phi_r$, moving into the recovered subgroup $r$. However, the remaining infected patients eventually die at a rate $\phi_d$. The model also features asymptomatic transmission, which has been considered a key driving force in the COVID-19 pandemic. To this end, we include a fraction of the exposed population $e$ that directly moves to the recovered subgroup $r$, without ever entering in the symptomatic infected compartment $i$. Note that:
\begin{align}\label{nDefn}
n=s+e+i+r	
\end{align}

\noindent denotes the entire living population.
\par We briefly make a few additional remarks regarding this model. The first is that this model operates on the principle of mass-action, with the contact terms $\beta_{i,e}$ non-normalized and hence dependent on local population densities, reflected in their units of 1/(Time$\cdot$Persons) \cite{VLABHPRYV2020Due, grave2020adaptive,MurrayI,MurrayII, grave2021assessing}. The spatial dependence of contagion is further augmented by the addition of the Allee term $A$, which accounts for the tendency of COVID-19 cases to cluster in areas where $n>>A$. This term has been used extensively in other settings, with the form used above inspired directly by applications in cancer modeling \cite{johnson2019cancer, delitala2020Cancer}. The Allee term works to reduce transmission in areas where the population density is under a given threshold $A$, by bringing the population in the exposed compartment to the susceptible compartment. Consequently, in such areas, the population in compartments $e$ and $i$ tends to zero, eventually cancelling out the transfer term. We observe that as $s$, $i$, and $e$ are all less than $n$ by definition, we do not expect blowup of this term, even for very small $n$.\alex{We note that the Allee term does not appear in the ODE model \eqref{eq1ODE}-\eqref{eq5ODE}. While one may technically include it, the lack of spatial variation in population density limits its usefulness from the modeling point of view, and conceptually its inclusion only makes sense in the PDE model \eqref{eq1}-\eqref{eq5} for this reason.} The diffusion terms in \eqref{eq1}-\eqref{eq5} are weighted by living population $n$, as the model hypothesizes that diffusion of individuals is not homogeneous, but preferential and directly proportional to the population.
\par This model was shown in \cite{VLABHPRYV2020} to \eli{exhibit} reasonably good agreement with measured data for the region of Lombardy, Italy, with later works \cite{BP2021, jha2020bayesian, grave2021assessing} showing good agreement in other regions. Further numerical and mathematical aspects were investigated in \cite{VLABHPRYV2020Due}, where it was also shown that models of this type can be put in the framework of continuum mechanics, and interpreted a balance of forces. Although the model \alex{demonstrates} acceptable agreement with reality and operates under sound physical assumptions, it relies extensively on unknown data. In particular, the exposed compartment (which also corresponds to the asymptomatic compartment) $e$ is difficult, if not impossible, to quantify with accuracy. We therefore propose the following modified model, which uses a \textit{delay differential equation} (DDE) formulation:

\begin{align}
\begin{split}\label{eq1Del} \partial_t s(t) &= \alpha n(t) - \left(1-\frac{A}{n(t)}\right)\beta_e s(t)i(t) - \left(1-\frac{A}{n(t)}\right)\beta_i s(t) i(t-\sigma) \\&\quad- \mu s(t)  + \nabla\cdot\left(n(t)\, \nu_s \nabla s(t)\right)\end{split} \\
\begin{split}\label{eq3Del} \partial_t i(t) &=  \left(1-\frac{A}{n(t)}\right)\beta_e s(t)i(t) + \left(1-\frac{A}{n(t)}\right)\beta_i s(t) i(t-\sigma) \\ &\quad- \phi_d i(t-\sigma) - \phi_r i(t-\sigma) - \mu i(t)  + \nabla \cdot\left(n(t) \nu_i \nabla i(t) \right)\end{split}\\
\label{eq4Del} \partial_t r(t) &= \phi_r i(t-\sigma) -   \mu r(t)  + \nabla \cdot\left(n(t)\, \nu_r \nabla r(t) \right) \\
\label{eq5Del} \partial_t d(t) &= \phi_d i(t-\sigma).
\end{align}
\par The corresponding ODE version of (\ref{eq1Del})-(\ref{eq5Del}) reads:
\begin{align}
\label{eq1DelODE} \dot{s}(t) &= \alpha n(t) - \beta_e s(t)i(t) - \beta_i s(t) i(t-\sigma)- \mu s(t)  \\
\label{eq2DelODE} \dot{i}(t) &= \beta_e s(t)i(t) + \beta_i s(t) i(t-\sigma) - \phi_d i(t-\sigma) - \phi_r i(t-\sigma) - \mu i(t) \\
\label{eq3DelODE} \dot{r}(t) &= \phi_r i(t-\sigma) -   \mu r(t)   \\
\label{eq4DelODE} \dot{d}(t) &= \phi_d i(t-\sigma).
\end{align}
We acknowledge a slight abuse of notation as, strictly speaking, $\sigma$ is the inverse of the corresponding value in system (\ref{eq1})-(\ref{eq5}), \eqref{eq1ODE}-\eqref{eq5ODE}. In general, the delay may be state-dependent; however, for the current work, we will assume that they are constant in order to simplify our analysis and computations. Note also that the definition of $n$ is now:
\begin{align}\label{nDDE}
n=s+i+r.	
\end{align}

\par As one may observe, the first major difference between the systems (\ref{eq1})-(\ref{eq5}) and (\ref{eq1Del})-(\ref{eq5Del}) is in the influence of the incubation period. Rather than include the exposed compartment $e$, the incubation period is incorporated into the system as a delay term on the infected compartment. A result of this choice is that in (\ref{eq1Del})-(\ref{eq5Del}), asymptomatic individuals are no longer specifically accounted for; all infected persons are considered equally. For the specific case of COVID-19, this may be a more reasonable assumption at this point in time, as testing protocols have improved and larger portions of asympomatic patients are now detected  \cite{kronbichler2020asymptomatic, Schneble2020}.
\par The second major difference between \eqref{eq1}-\eqref{eq5} and \eqref{eq1Del}-\eqref{eq5Del} is the evolution of the recovered compartment $r$ and deceased compartment $d$. As formulated in \eqref{eq1}-\eqref{eq5}, all members of the infected compartment $i$ are equally likely to die or recover at the same time; it does not make any distinction on these patients based on time of infection. In contrast, the recovery and mortality rates in \eqref{eq1Del}-\eqref{eq4Del} are delay-dependent, evolve according to the infected population at a previous point in time. This is a more realistic representation of epidemic dynamics, and may be useful when considering the allocation of public health resources. 
\subsection{Relationship between the PDE and ODE}
\alex{The \eli{presented delayed} PDE model \eqref{eq1Del}-\eqref{eq5Del} and ODE model  \eqref{eq1DelODE}-\eqref{eq4DelODE} are related, \eli{since they are used to describe the same phenomenon}, but differ due to the presence of diffusion and the Allee term $A$. This spatial information obviously gives the PDE model a richer descriptive capacity; however, it is also the case that \eli{the mathematical analysis and numerical} simulations using the PDE require significantly more effort, both in terms of computational time and the complexity of the model implementation. For this reason, the question of when the ODE model may provide a reasonable surrogate for the PDE is an important one.}
\par \nic{As our analysis in the following sections will demonstrate,
close to the zero equilibrium, that is when all quantities are small, the spatial (diffusive) terms, which are quadratic,
are negligible and the PDE becomes an ODE with delay terms. In the case when $A=0$, which is considered for example in our 1D simulations, a complete stability analysis of the equation allows to obtain sharp rigorous stability bounds which emphasize the dependence of stability of the steady state with respect to the delay. In the case when $A$ is nonzero, such stability bounds can be interpreted in an approximate way.  
For reasonably small variations in local population densities, the ODE solution may still well-approximate the spatially integrated PDE solution. Thus, if the spatial transients are not considered important, as may be the case in certain applications, the ODE model may be a more practical choice than the PDE model for its computational convenience. We will discuss the relationship between the two models formally in the analysis section.}

\eli{Moreover, we will qualitatively compare the behavior of the solutions obtained with the two models, in order to illustrate the theoretical results in the numerical experiments sections.} 
\section{Analysis}
In this section we will analyze the DDE models \eqref{eq1Del}-\eqref{eq5Del}, \eqref{eq1DelODE}-\eqref{eq4DelODE} mathematically. In particular, we examine the equilibrium solutions of \eqref{eq1DelODE}-\eqref{eq4DelODE} and \eqref{eq1Del}-\eqref{eq5Del} for the case $A=0$ and their stability properties. We then proceed to analyze the scalar linear equation associated to \eqref{eq1DelODE}-\eqref{eq4DelODE},  \eqref{eq1Del}-\eqref{eq5Del} with $A=0$, deriving stability conditions in terms of the physical parameters. \alex{We then examine the general case of \eqref{eq1Del}-\eqref{eq5Del} for $A \neq 0$, and analyze the impact of the Allee term on the stability behavior.}

\subsection*{Equilibria and their stability}

It is straightforward to note that the only equilibrium of (\ref{eq1DelODE})-(\ref{eq4DelODE}) is
\begin{equation*}
\left( s^*, i^*, d^*, r^* \right) = \left( 0, 0, 0, 0 \right).
\end{equation*}

The linearized system is given by
\begin{align}
\label{linearized1}\dot{s}(t) &= \alpha\,n(t) -  \mu\,s(t)
\\
\label{linearized2} \dot{i}(t) &= -\left( \phi_d + \phi_r \right) i(t-\sigma) - \mu\,i(t) \\
\label{linearized3} \dot{r}(t) &= \phi_r i(t-\sigma) - \mu\,r(t) \\
\label{linearized4}\dot{d}(t) &= \phi_d i(t-\sigma)
\end{align}
By adding equations \eqref{linearized2}-\eqref{linearized4} to \eqref{linearized1} and making use of the variable $n$ (as defined in \eqref{nDDE}), instead of $s$ we may replace \eqref{linearized1} with
\begin{eqnarray}
\dot{n}(t) & = & \left( \alpha -  \mu\right)\,n(t) + \mu\,d(t)
\label{eq:ddelin2}
\end{eqnarray}

Due to the last equation (\ref{linearized4}) of the DDE system, no contractivity arguments can be used to
infer about the asymptotic stability of the solution.

\subsection*{Stability of the scalar linear delay equation}

In order to state the stability theorem we need to consider the scalar equation
\begin{equation}
\dot{y}(t) = \alpha y(t) + \beta y(t-\tau)
\label{dde}
\end{equation}
with the delay $\tau$ an arbitrary but fixed positive constant.

By the change of variables $t=t/\tau$, $a = \alpha \tau$, $b = \beta \tau$ we are 
led to the equation
\begin{equation}
\dot{y}(t) = a y(t) + b y(t-1).
\label{dde0}
\end{equation}
The analysis of the characteristic equation,
\begin{equation}
\lambda = a + b\,{\rm e}^{-\lambda}
\label{eq:char}
\end{equation}
which for $b \neq 0$ possesses infinitely many solutions $\{ \lambda_k \}_{k=1}^{\infty}$, gives indications about the asymptotic behavior of the 
solution of \eqref{dde0}. 
The general solution is then obtained as a sum of exponentials,
\begin{equation*}
y(t) = \sum_k c_k {\rm e}^{\lambda_k t}.
\end{equation*} 
The zeros of \eqref{eq:char} are plotted in the left picture of Figure \ref{fig:stab} for the case $(a,b)=(0.5,-1)$. 
Notice that they all lie in the left half-plane, so that the solution will tend to zero for $t \to \infty$ (despite a positive $a$).

We provide the stability region in the right picture of Figure \ref{fig:stab} (see e.g \cite{BZ03}).
\begin{figure}
\centering
\begin{tabular}{c}
\hspace{-0.1in}
\includegraphics[height=2.05in]{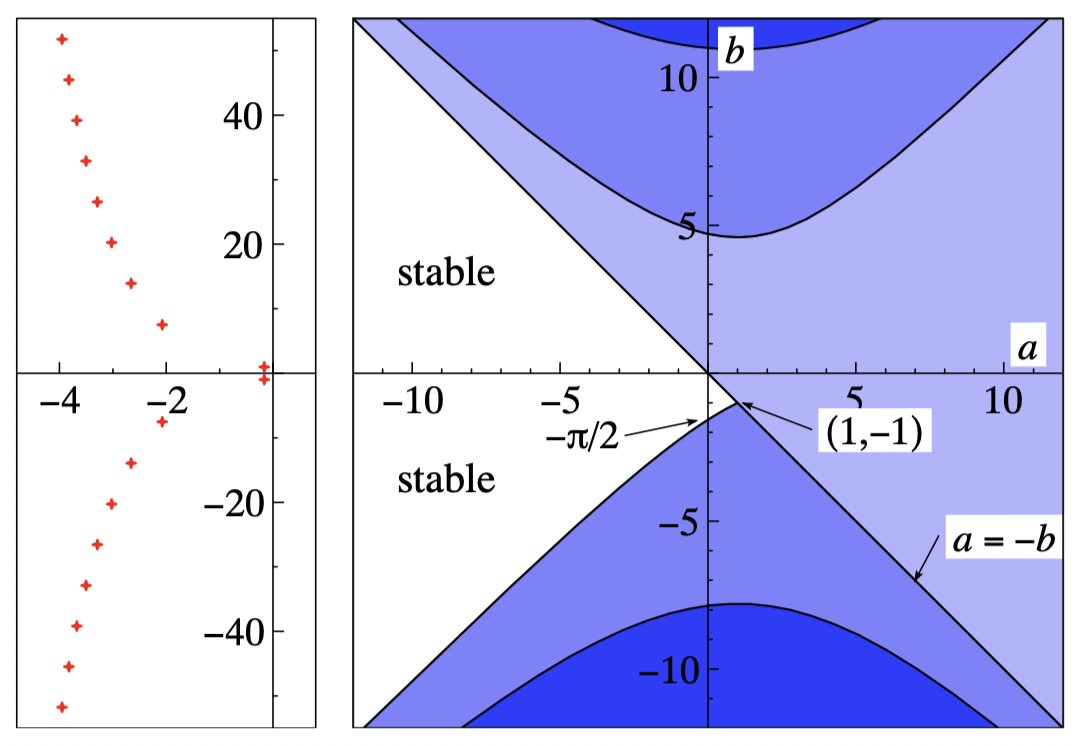}
\end{tabular}
\caption{Asymptotic stability region of equation \eqref{dde0} in the $(a,b)$-plane (right); roots  of the characteristic equation (left) for $a=0.5, b=-1$.
\label{fig:stab}}
\end{figure}
Note that if $a=0$ then the zero solution is asymptotically stable for $b \in (-\pi/2, 0)$ and equivalently
- referring to equation \eqref{dde} - it is asymptotically stable if $b \tau \in (-\pi/2, 0)$. 
The transcendental curve bounding the stability region is expressed in parametric form as
\begin{equation*}
\left( a(\phi), b(\phi) \right) = \left( \phi \cot(\phi), {}-\frac{\phi}{\sin(\phi)} \right), \qquad \phi \in (0,\pi) 
\end{equation*} 
which might be expressed as a monotonically increasing function $b(a)$. Asymptotically the curve approaches the line $b=-a$;
as $\phi \to 0$ it tends to the point $(1,-1)$ and at $\phi=\pi/2$ it crosses the point $(0,-\pi/2)$. 

\begin{theorem}\label{thm1}
Assume 
\begin{itemize}
\item[(i) ]  $\alpha - \mu < 0$ or $\alpha = \mu = 0$; 
\vspace{2mm}
\item[(ii) ] $\displaystyle \phi_d + \phi_r < \frac{\pi}{2 \sigma}$.
\end{itemize}
Then the zero equilibrium of \eqref{linearized1}-\eqref{linearized4}, \eqref{eq:ddelin2} is stable.
\end{theorem}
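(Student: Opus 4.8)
The plan is to exploit the lower-triangular (cascade) structure of the linearized system. Working with the variables $(n,i,r,d)$ — recall $s=n-i-r$ — equation \eqref{linearized2} for $i$ is self-contained; $i$ then drives $d$ and $r$ through \eqref{linearized4} and \eqref{linearized3}, $d$ in turn drives $n$ through \eqref{eq:ddelin2}, and $s$ is recovered algebraically. So I would first establish the decay of $i$, then propagate bounds up the cascade one scalar equation at a time, ending with a Lyapunov estimate controlling the whole state, uniformly in $t\ge 0$, by the sup-norm of the initial history $\psi$ on $[-\sigma,0]$.

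\emph{Step 1 (the scalar delay equation).} Since \eqref{linearized2} reads $\dot i(t)=-\mu\,i(t)-(\phi_d+\phi_r)\,i(t-\sigma)$, the rescaling $t\mapsto t/\sigma$ used to pass from \eqref{dde} to \eqref{dde0} puts it in the form \eqref{dde0} with $a=-\mu\sigma\le 0$ and $b=-(\phi_d+\phi_r)\sigma\le 0$. I would then place $(a,b)$ inside the asymptotic stability region recalled above and pictured in Figure \ref{fig:stab}: the boundary curve $b(a)$ is monotonically increasing with $b(0)=-\pi/2$, hence $b(a)\le-\pi/2$ for every $a\le 0$; hypothesis (ii) is exactly $b=-(\phi_d+\phi_r)\sigma>-\pi/2$, while $a<1$ and $b\le-a$ hold trivially, so $(a,b)$ lies in the stability region (more precisely in its closure; the only boundary case is $\mu=\phi_d=\phi_r=0$, where $i$ is merely constant, but then the forcing terms appearing below vanish identically and no difficulty arises). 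Consequently every root of the characteristic equation of \eqref{linearized2} has negative real part and, as roots of a linear autonomous DDE, they are uniformly bounded away from the imaginary axis; this yields an exponential estimate $|i(t)|\le C\,e^{-\gamma t}\,\|\psi\|_\infty$ with $C,\gamma>0$, and in particular $\int_0^\infty|i(s)|\,ds\le C'\,\|\psi\|_\infty$.

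\emph{Step 2 (propagation and conclusion).} Integrating \eqref{linearized4} gives $d(t)=d(0)+\phi_d\int_0^t i(s-\sigma)\,ds$, which is bounded (indeed convergent) by the integrability of $i$, with $\sup_{t\ge 0}|d(t)|\le C_d\,\|\psi\|_\infty$. For $n$: under the first alternative in (i) the coefficient $\alpha-\mu$ is negative, so \eqref{eq:ddelin2} is a uniformly asymptotically stable scalar ODE forced by the bounded term $\mu\,d(t)$, and variation of constants gives $|n(t)|\le|n(0)|+\frac{\mu}{\mu-\alpha}\sup_{s}|d(s)|$; under the second alternative $\alpha=\mu=0$, \eqref{eq:ddelin2} reduces to $\dot n\equiv 0$ and $n\equiv n(0)$. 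The same variation-of-constants argument applied to \eqref{linearized3} bounds $r$ (trivially when $\mu=0$, where $r(t)=r(0)+\phi_r\int_0^t i(s-\sigma)\,ds$). Finally $s=n-i-r$ is bounded by the triangle inequality. Assembling the estimates, $\|(s,i,r,d)(t)\|\le C''\,\|\psi\|_\infty$ uniformly in $t\ge 0$, which is the asserted stability of the zero equilibrium.

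I expect the only genuine obstacle to be Step 1: correctly reading off that $(a,b)$ belongs to the classical stability region for \eqref{dde0}, and recording that asymptotic stability of a linear DDE comes with a uniform exponential rate (so that $i$ is integrable and the cascade estimates close). The remaining steps are routine bookkeeping. In line with the remark preceding the statement, I would also note that one cannot upgrade ``stable'' to ``asymptotically stable'': $d$ — and, when $\mu=0$, also $r$ and $n$ — generally tends to a nonzero constant, so the terminal equation \eqref{linearized4} precludes any contraction argument.
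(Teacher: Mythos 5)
Your proposal is correct and follows essentially the same route as the paper's proof: treat \eqref{linearized2} as a scalar delay equation of the form \eqref{dde}, use the stability region (with $a\le 0$ forcing the boundary curve below $-\pi/2$, so that hypothesis (ii) places the roots in the open left half-plane and yields exponential decay of $i$), and then propagate boundedness through $d$, $r$, $n$ (and $s=n-i-r$) by integration/variation of constants, concluding stability but not asymptotic stability. Your treatment is in fact slightly more careful than the paper's on two points the paper leaves implicit — the degenerate case $\mu=\phi_d=\phi_r=0$ and the uniformity of the final estimate in terms of $\|\psi\|_\infty$ — but the argument is the same.
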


\begin{proof}
Consider first the DDE
\begin{equation*}
\dot{i}(t) =  - \mu\,i(t) -\left( \phi_d + \phi_r \right) i(t-\sigma)
\end{equation*}
which has the form \eqref{dde}. By non-negativity of $\mu$ we have that if 
\[
( \phi_d + \phi_r ) \sigma < \frac{\pi}{2} 
\]
the characteristic equation has all roots in the negative complex plane so that $\lim_{t \to \infty} i(t) = 0$. 
Moreover it can be shown that the decay is exponential (see \cite{DGVW95}).

Next look at
\begin{equation*}
\dot{d}(t) = \phi_d i(t-\sigma)
\end{equation*}
from which we obtain boundedness of $d$ as a consequence of the exponential decay of $i(\cdot)$.
Similarly, looking at
\begin{equation*}
\dot{r}(t) = - \mu\,r(t) + \phi_r i(t-\sigma) 
\end{equation*}
we have exponential decay if $\mu > 0$ and simply boundedness if $\mu=0$.
Finally, for what concerns the equation
\begin{equation*}
\dot{n}(t) = \left(\alpha -  \mu \right)\,n(t) + \mu\,d(t)
\end{equation*}
we conclude in a similar way, which means that if $\alpha-\mu < 0$ $n(t) \to 0$ as $t \to \infty$;
otherwise if $\alpha=\mu=0$ the stability statement holds trivially.
\end{proof}

\subsection{Stability of the equilibrium of the PDE}

The analysis of the PDE \alex{\eqref{eq1Del}-\eqref{eq5Del}} when $A=0$ leads to the same stability Theorem \ref{thm1}. This \alex{follows immediately from the fact that}, in the linearization
of \eqref{eq1Del},\eqref{eq3Del} and \eqref{eq4Del},
the terms 
$\nabla\cdot\left(n(t)\, \nu_s \nabla s(t)\right)$
and
$\nabla \cdot\left(n(t) \nu_i \nabla i(t) \right)$
are quadratic and hence do not \alex{influence the analysis}. As a consequence, the linearized system is formally analogous to the one obtained for the simpler DDE model \alex{\eqref{eq1DelODE}-\eqref{eq4DelODE}}.

More in detail, looking at the PDE-based model we have that, at the equilibrium
$\left( s^*, i^*, d^*, r^* \right) = \left( 0, 0, 0, 0 \right)$.
Neglecting second order terms gives the system:
\begin{eqnarray}
\label{linearizedPDE1}
\partial_t s(t) &=& \alpha n(t) - \left(1-\frac{A}{n(t)}\right) s(t) \left(\beta_e i(t) + \beta_i i(t-\sigma) \right)  - \mu s(t)   
\\
\label{linearizedPDE2}
\partial_t i(t) &=&  \left(1-\frac{A}{n(t)}\right) s(t) \left(\beta_e i(t) + \beta_i i(t-\sigma) \right) 
- \left( \phi_d i + \phi_r \right) i(t-\sigma) - \mu i(t)  
\\
\label{linearizedPDE3}
\partial_t r(t) &=& \phi_r i(t-\sigma) -   \mu r(t)   
\\
\label{linearizedPDE4}
\partial_t d(t) &=& \phi_d i(t-\sigma).
\end{eqnarray}
\noindent Observe that:

\begin{itemize}
\item[(a) ] If $A = 0$ we formally reobtain the same system \eqref{linearized1}-\eqref{linearized4} 
so that Theorem \ref{thm1} applies unchanged. \alex{In fact, in such a case, we may view the linearized system \eqref{linearized1}-\eqref{linearized4} as the system \eqref{linearizedPDE1}-\eqref{linearizedPDE4} integrated in space.}

\item[(b) ] The case $A \neq 0$ is more \alex{involved.}

By adding equations \eqref{linearizedPDE2}-\eqref{linearizedPDE4} to \eqref{linearizedPDE1} and making use of the variable $n$ instead of $s$, we may replace \eqref{linearizedPDE1} with:
\begin{eqnarray}
\nonumber
\partial_t n(t) & = & \left( \alpha -  \mu\right)\,n(t) + \mu\,d(t).
\label{eq:pdelin2}
\end{eqnarray}
Since $n(t) = s(t) + i(t) + r(t) $ and $s,i$ and $r$ are non-negative, we have that:
\begin{equation} \label{eq:delta}
\delta(t) := \frac{s(t)}{n(t)} \in [0,1] \qquad \forall t.
\end{equation}
which allows us to rewrite \eqref{linearizedPDE1}-\eqref{linearizedPDE4} as:
\begin{eqnarray}
\partial_t s(t) &=& \alpha n(t) + A \beta_e \delta (t) i(t) + A \beta_i \delta(t) i(t-\sigma) - \mu s(t)   
\nonumber
\\
\partial_t i(t) &=&  -A \delta(t) \left(\beta_e i(t) + \beta_i i(t-\sigma) \right) - \left( \phi_d + \phi_r \right) i(t-\sigma) - \mu i(t)
\nonumber
\\
&& \label{eq:pdelin3}
\\
\partial_t r(t) &=& \phi_r i(t-\sigma) -   \mu r(t)   
\nonumber
\\
\partial_t d(t) &=& \phi_d i(t-\sigma).
\nonumber
\end{eqnarray}

Looking at the second equation in \eqref{eq:pdelin3} we recognize a DDE of the form
\begin{equation}
\partial_t i(t) = a(t) i(t) + b(t) i(t-\sigma).
\label{eq:lddena}
\end{equation}
A well-known asymptotic stability condition for \eqref{eq:lddena}
is given by (see e.g. \cite{BZ03})
\begin{equation*}
a(t) + |b(t)| < 0 \qquad \forall t,
\end{equation*}
which yields:
\begin{equation*}
-\mu -A \left(\beta_e - \beta_i\right) \delta(t) + \phi_d + \phi_r < 0. 
\end{equation*}
In the usual \alex{case where} $\beta_e \ge \beta_i$, we obtain that the condition 
\begin{equation*}
-\mu + \phi_d + \phi_r < 0
\end{equation*}
implies asymptotic stability (of contractive type) of the solution $i$ independently of the delay $\sigma$.

We remark that this is a sufficient and not necessary condition for a stronger type of asymptotic stability, namely contractivity, 
of the solution.

Under this condition we have that $\lim\limits_{t \rightarrow \infty} i(t) = 0$.
The analysis of the remaining equations for the variables $r,d$ and $n$ is 
analogous to the one provided in the proof of Theorem \ref{thm1}.

\end{itemize}

Note that if we could treat $\delta(t)$ as a constant (see \eqref{eq:delta}),
we would get the same sufficient conditions to asymptotic stability provided
by Theorem \ref{thm1}, i.e.
\begin{itemize}
\item[(i) ]  $\alpha - \mu < 0$ or $\alpha = \mu = 0$; 
\hskip 1cm or \hskip 1cm
(ii) \ \ \ $\displaystyle \phi_d + \phi_r < \frac{\pi}{2 \sigma}$.
\end{itemize}
thus depending on the delay $\sigma$. 
In a regime where $\delta(t) \in [0,1]$ does not exhibit big variations, we expect that
such conditions continue to hold true, at least approximately.

\subsection{Comments}

In some cases we observe non-physical slightly negative values of the modeled quantities. This is due to the fact
that when 
\[
\nu=0 \quad \mbox{and} \quad 
\displaystyle \phi_d + \phi_r \ge \frac{1}{e \sigma},
\]
the rightmost roots of the characteristic equation are complex conjugate. This is easily seen observing that the equation $\lambda = b\,\mathrm{e}^{-\lambda}$ has no real roots if $b < -\frac{1}{\mathrm{e}}$.

Instead, when
$\displaystyle \phi_d + \phi_r < \frac{1}{e \sigma}$ a real root dominates.
However, since the oscillations occur when the solution approaches the steady state in the asymptotically stable regime, we may overlook the potential misbehavior.

\begin{remark} \rm
If $\mu < 0$ then the stability bound $\phi_d + \phi_r < \frac{M}{\sigma}$ can be made larger; that is,
$M$ increases as $\mu$ increases (see Figure \ref{fig:stab}).
\end{remark}

\begin{remark} \rm
We may understand the bound (ii) of Theorem 3.1 physically as a relationship between the removal rate, as governed by the parameters $\phi_d$ and $\phi_r$, and the time delay $\sigma$. This bound states that, for the equations to be stable, the rate of recovery and/or mortality from the disease must occur over a time scale sufficiently longer than the time delay $\sigma$ (recall that $\phi_d$ and $\phi_r$ have units 1/Time, while $\sigma$ has units Time). 
\end{remark}

\section{Numerical implementation and experiments}
In this section we will perform several numerical tests to evaluate various characteristics of the model and its numerical solution. In particular, we will perform the following experiments:
\begin{enumerate}
\item Several examples using the ODE version of the model 
\eqref{eq1DelODE}-\eqref{eq4DelODE}. Here, we observe the impact of the delay on aspects of the physical solution, including the effects on contagion and lockdown measures. We also examine the derived stability bounds and influence of different parameters.
\item A one-dimensional example using the PDE model \eqref{eq1Del}-\eqref{eq5Del} based on the simulation performed in \cite{VLABHPRYV2020Due,grave2020adaptive} for different values of $\sigma$ and problem parameters. These examples seek to examine the solution characteristics in both quantitative and qualitative aspects on an artificial problem which shares many characteristics with a real-world problem, but remains tractable and sufficiently simple to analyze in detail. We also seek to confirm the correspondence between simulations using the ODE and spatially integrated solutions of the PDE.
\item A two-dimensional example using the COVID-19 outbreak in Lombardy, Italy employing the PDE model \eqref{eq1Del}-\eqref{eq5Del}. This simulation is similar to the ones carried out in \cite{VLABHPRYV2020Due, VLABHPRYV2020}, which were well-validated against the measured data at the time of publication. This example is designed to show the viability of the delay-equation formulation in reproducing real-world data, as well as its performance when compared to non-delay models.
\end{enumerate}
\subsection{ODE Model}
In order to perform the simulations for the ODE model \eqref{eq1DelODE}-\eqref{eq4DelODE}, we employ the Matlab solver DDE23. As initial conditions, we set the total population $n=1000$, and as a historic function we choose  $i(t)=1$ for $t\in [-\sigma,0]$. Assuming $r(0)=0$ and $d(0)=0$, we end up with $s(0)=n-i$. The final time of the simulation is $t=267$ days. The parameter values are reported in Table \ref{tab:1DParametertTable}\footnote{For the ODE model, these values have been normalized by $n(0)$=1000, and accordingly has units of Days$^{-1}$.}. To observe the impact of the delay, we run the simulation for different values of $\sigma$: $\sigma=5, 10, 15$ and $20$ days.

We note that for increasing values of the delay the number of the infections is higher, i.e. in Fig. \ref{fig:ODE_Infected}(left) the infection peak for $\sigma=5$ (black line) is much lower than the peak for $\sigma=20$ (magenta line). Furthermore, we observe that the amplitude of the peak is larger for high values of the delay. On the other hand, if the delay is too high with respect to the parameters, we could obtain non physical solutions, as in Fig. \ref{fig:ODE_Infected}(left), where the infections becomes negative for $\sigma=20$. In the following we will investigate the impact of the government restrictions, i.e. the introduction of lockdowns, and the stability of the model from a numerical point of view.

\begin{table}
\begin{center}
\begin{tabular}{ |c|c|c|c| } 
\hline
Parameter   &  Units  & Value \\
\hline\hline
$\parbetae$\footnotemark[1] & Persons$^{-1} \cdot $ Days$^{-1}$ & 9/40 \\ \hline
$\parbetai$\footnotemark[1] & Persons$^{-1} \cdot $ Days$^{-1}$ & 3/32\\ \hline
$\parphir$ &  Days$^{-1}$ & 1/32 \\ \hline
$\parphie$  &  Days$^{-1}$ & 1/8 \\ 
\hline
$\parphid$ &  Days$^{-1}$ & 3/640 \\ \hline
$\mu$ & Days$^{-1}$& 0\\ \hline
$\alpha$ &  Days$^{-1}$ & 0\\ \hline

 $\overline\nu_s^*$& Persons$^{-1} \cdot $ Days $^{-1}$  & 3.75$\cdot 10^{-5}$ \\ \hline
 $\overline\nu_e^*$& Persons$^{-1} \cdot $ Days $^{-1}$  & .75$\cdot 10^{-3}$ \\ \hline
 $\overline\nu_i^*$& Persons$^{-1} \cdot $ Days $^{-1}$  & .75$\cdot 10^{-10}$ \\ \hline
 $\overline\nu_r^*$& Persons$^{-1} \cdot $ Days $^{-1}$  & 3.75$\cdot 10^{-5}$ \\
 \hline

\end{tabular}
\caption{Parameter values for the ODE and 1D simulations. Note all values have been normalized in space by a characteristic length scale $L$, with this normalization reflected in the units.}
  \label{tab:1DParametertTable} 
\end{center}
\end{table}

\subsubsection{Effect of lockdowns}
Due to the relevance of the pandemic on the dailylife routine, we seek to observe the effect of government restrictions, i.e., lockdowns, on the evolution of the compartments. To this end, we run two cases, one without lockdowns, in which the contact rates $\beta$ are kept constant throughout the simulation, and one with lockdowns, in which we set $\beta=\beta/4$ at $t=30$ days. In fact, the aim of the lockdown is to reduce the contact rate.

In Fig. \ref{fig:ODE_Infected}, we show the evolution of the infected compartment in time, for the different values of $\sigma$ without (left) and with (right) restrictions. It is clear that the number of infections increases with the delay even in the lockdown situation, as expected. However the lockdown restriction reduces the number of infected people by about $20\%$. Moreover, we observe the same effect focusing on the deceased compartment. Indeed, looking at the deaths peak in Fig. \ref{fig:ODE_stab}, the deaths peak is lower in the lockdown situation.
\par We also observe that, with larger values for the delay, the effect of the lockdown measures is less readily observed. One may particularly see this in Fig. \ref{fig:ODE_Infected} on the right, where the decrease of infections as a result of the lockdown begins very quickly for $\sigma=5$, and progressively more slowly for larger values of $\sigma$. This is consistent with our expectations.
\begin{figure}\centering
  \includegraphics[width=.49\textwidth]{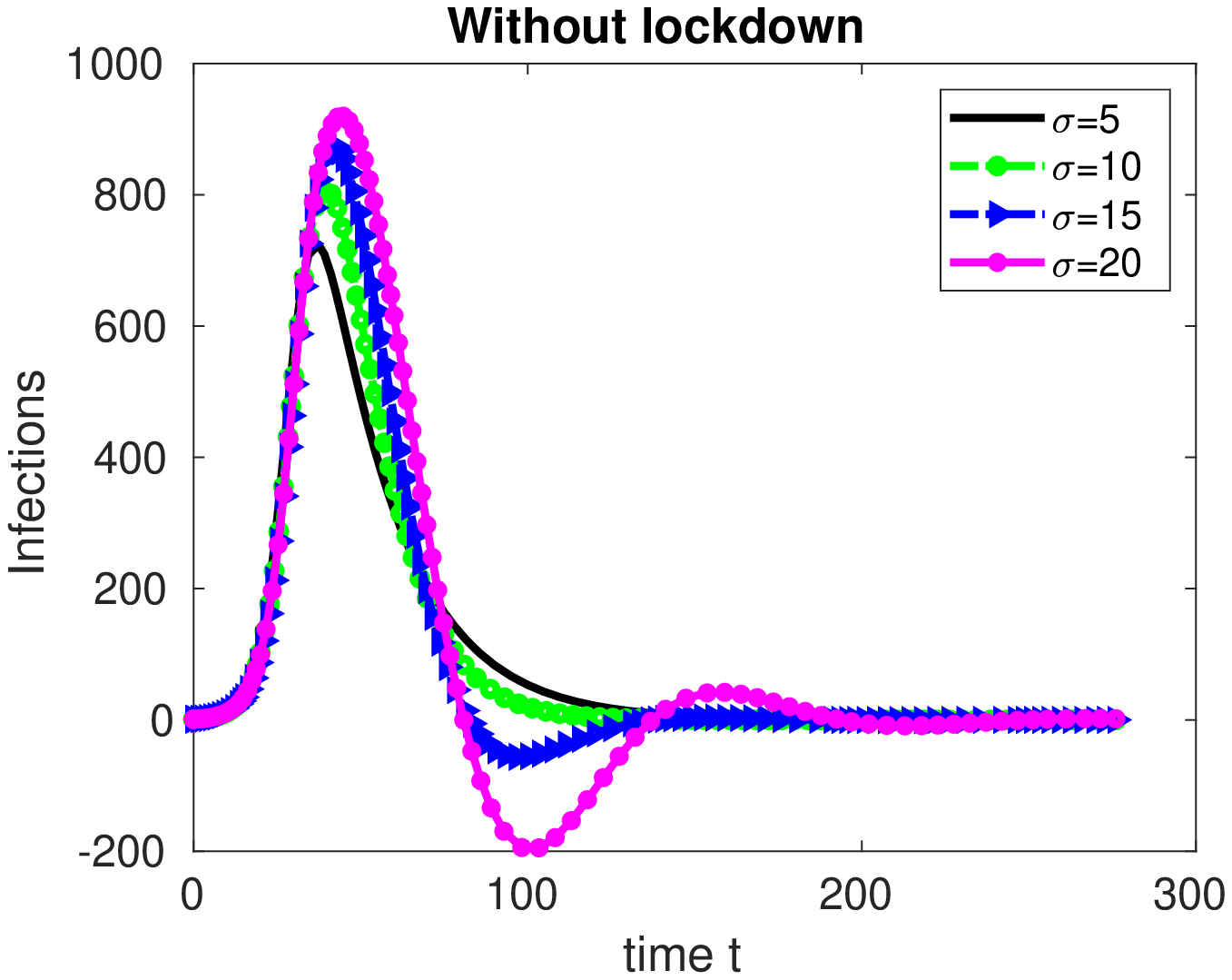} 
  \includegraphics[width=.49\textwidth]{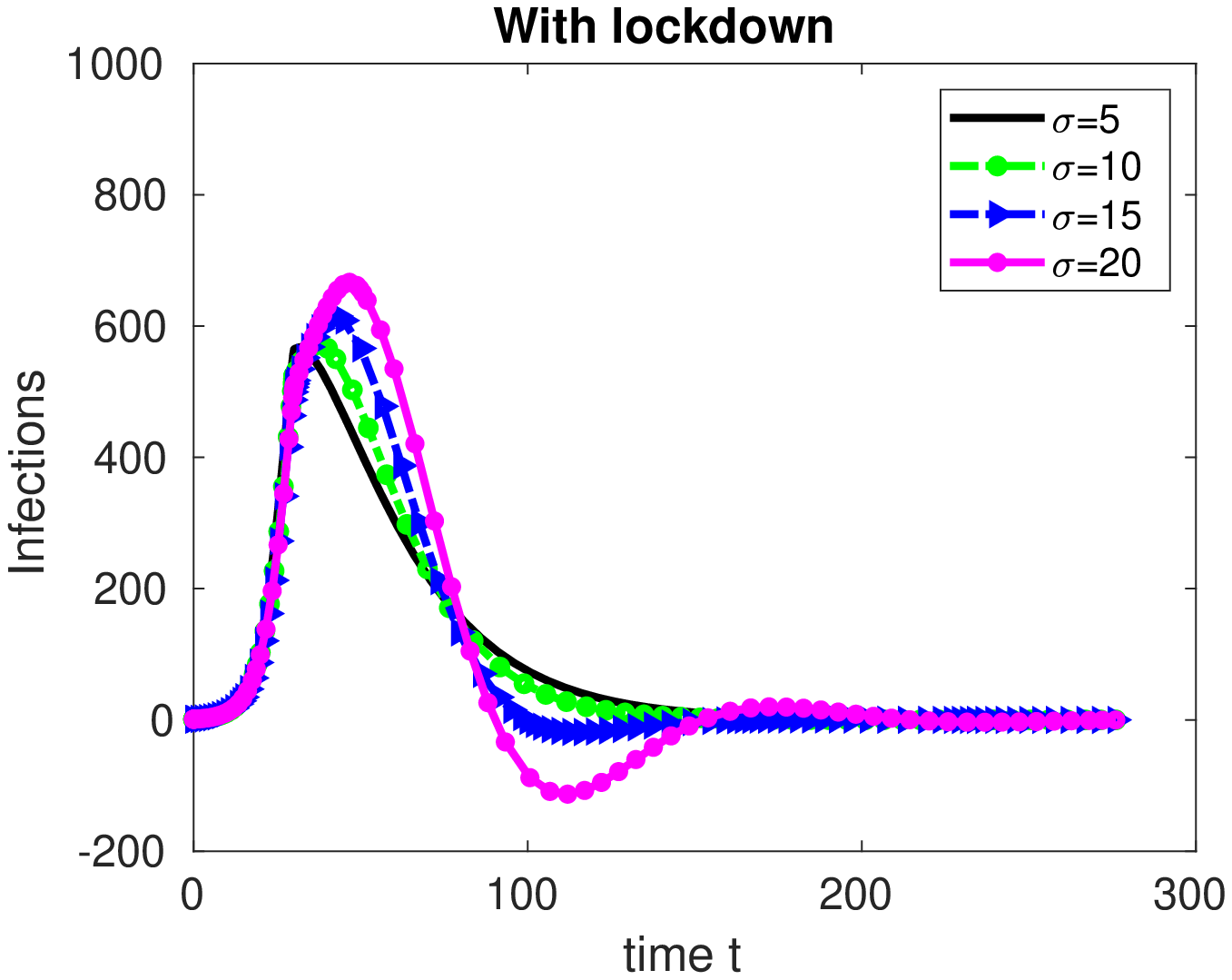} \caption{Total infected for $\phi_r$=1/32, $\phi_d$=3/640 for different delay values in the non lockdown (left) lockdown (right) case.}\label{fig:ODE_Infected}
\end{figure}
\begin{figure}\centering
  \includegraphics[width=.49\textwidth]{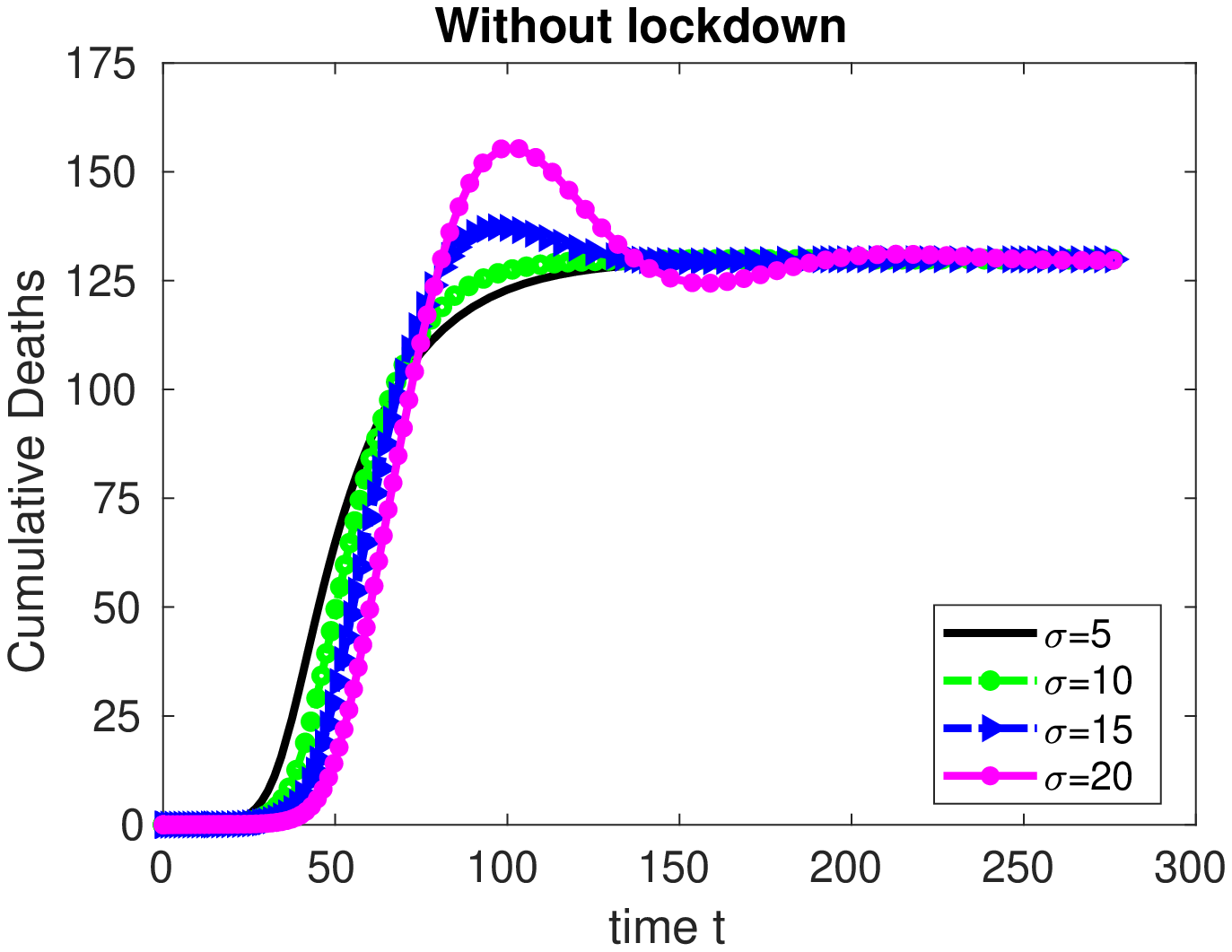}
  \includegraphics[width=.5\textwidth]{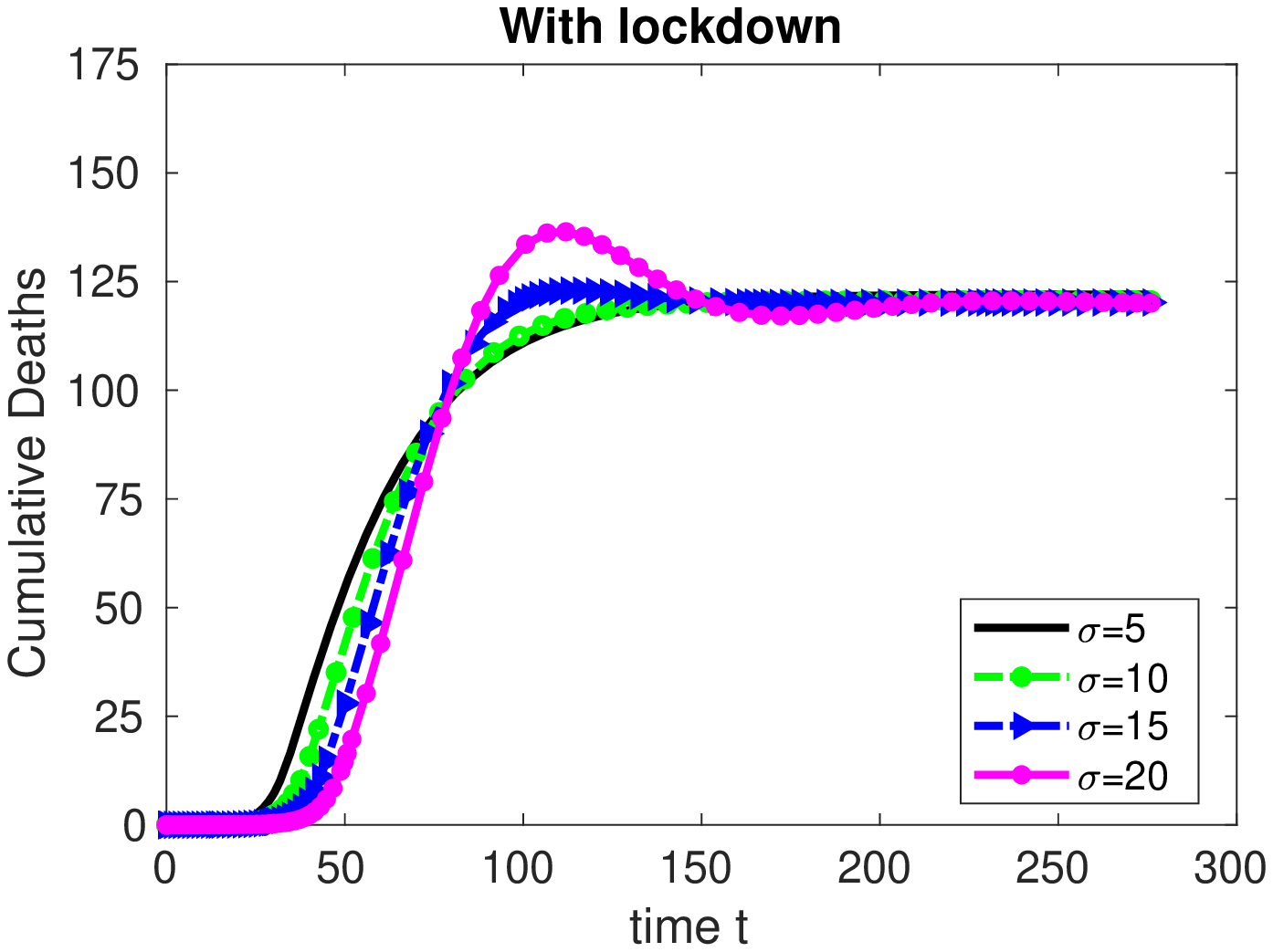}
  \caption{Total deceased for $\phi_r$=1/32, $\phi_d$=3/640 and for different delay values in the non lockdown (left) lockdown (right) case.}\label{fig:ODE_stab}
\end{figure}

\subsubsection{Stability of the ODE model}
We now investigate the numerical stability of the ODE model, and in particular, we seek to examine the validity of the bound in Theorem \ref{thm1} and verify it numerically. 
Looking at the deceased compartment, Fig. \ref{fig:ODE_stab} shows that the solution is stable for $\sigma=5,10,15,20$, since we choose the parameters according to the bounds $(i)-(ii)$ of the Theorem \ref{thm1}. Indeed, $\alpha=\mu=0$, $\phi_r=1/32, \phi_d=3/640$, which means that
	$$\frac{1}{32}+\frac{3}{640} \approx .0359 < \frac{\pi}{2\sigma}$$
for all our choices of the delay. 

On the other hand, modifying the parameters as $\phi_r=3/32$ and  $\phi_d=1/80$ we get an unstable solution for $\sigma=15$, as shown in Fig. \ref{fig:ODE_non_stab}. In fact for $\sigma=15$ the oscillations are increasing in time, while for $\sigma=10$ they are smearing out. This is also consistent with the analysis, as we may expect oscillations for larger values of $\sigma$, however, if the numerical bound is respected, these oscillations should stabilize and not affect the solution asymptotically.

\begin{figure}\centering
  \includegraphics[width=.7\textwidth]{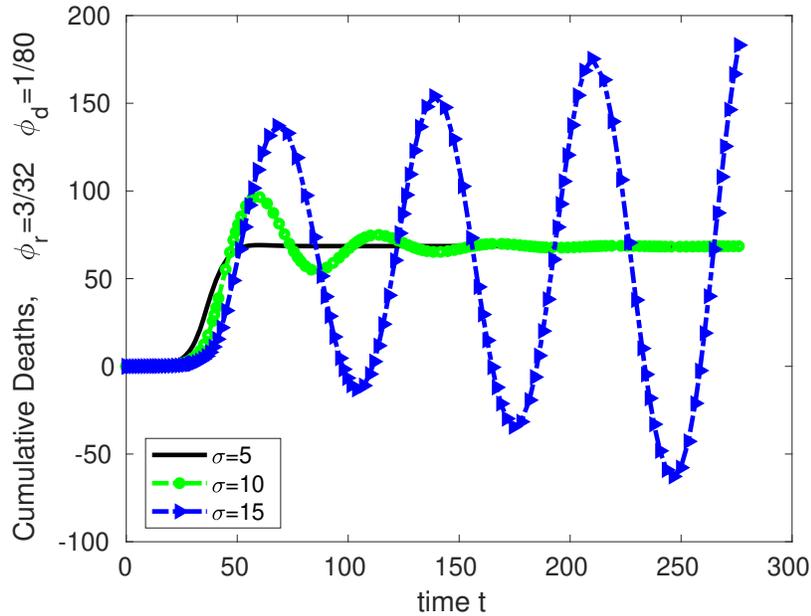} \caption{Total deceased for $\phi_r$=3/32, $\phi_d$=1/80, for different delay values.}\label{fig:ODE_non_stab}
\end{figure}

An interesting behavior that we also observe, is that if we choose $\phi_d=1/80$ and $\phi_r=28/320$ we obtain a periodic, non-physical behaviour of the solution for $\sigma=15$, Fig. \ref{fig:ODE_periodic}. As shown in the figure, we observe oscillations that neither increase nor decrease, instead demonstrating what appears to be a true periodic regime. Indeed the with these parameters we have:
$$\frac{28}{320}+\frac{1}{80} \approx \frac{\pi}{2\sigma}.$$ This suggests that, near the limit of the stability bound, solutions exhibit a periodic behavior. Considering that the oscillations decrease for $\phi_r$, $\phi_d$ sufficiently below the stability bound, and increase for $\phi_r$, $\phi_d$ sufficiently large, this behavior is perhaps to be expected. Whether this is a mere mathematical curiosity or perhaps indicates relevant biological information is not clear, and is potentially a subject of future investigation.
\begin{figure}\centering
\includegraphics[width=.49\textwidth]{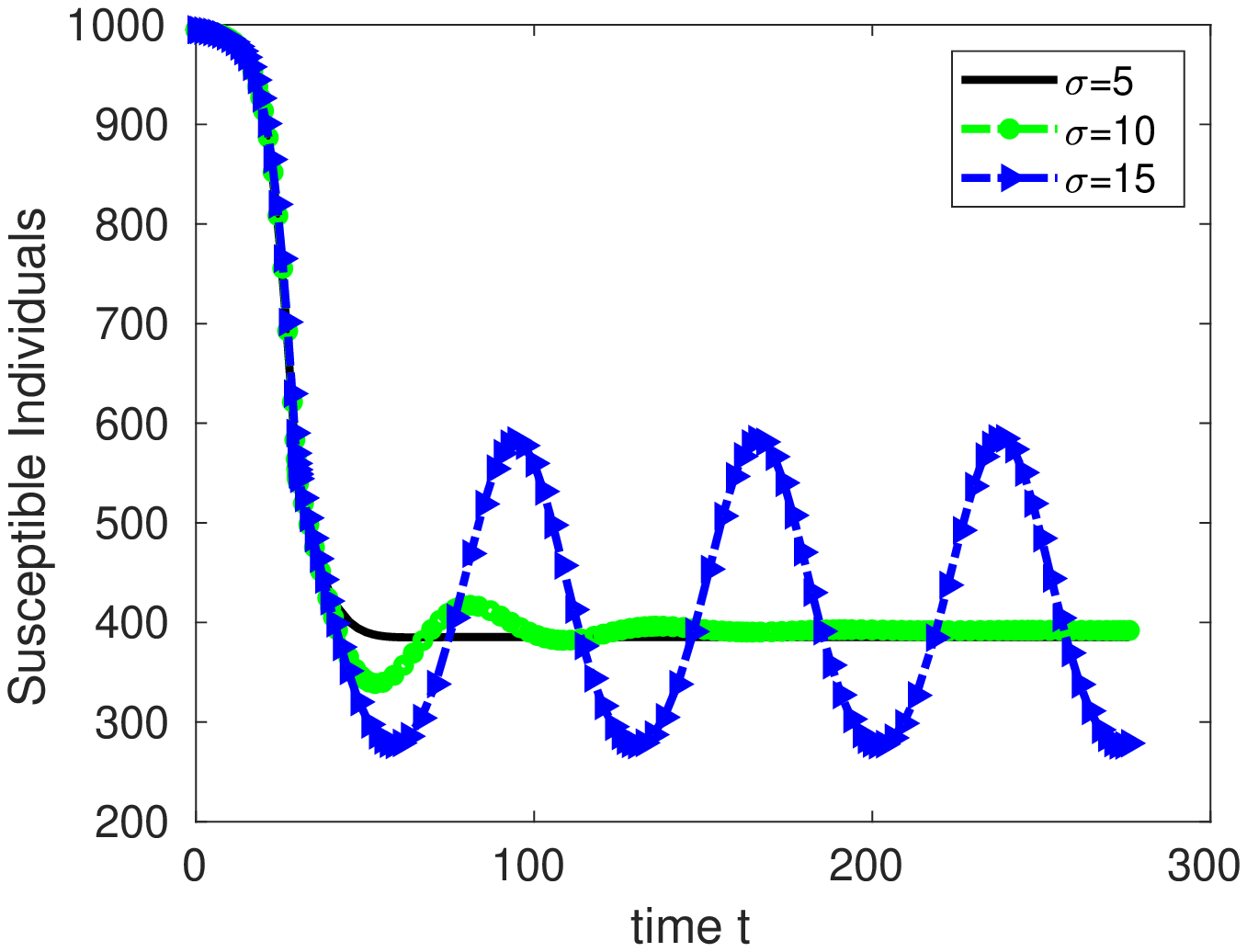}
\includegraphics[width=.49\textwidth]{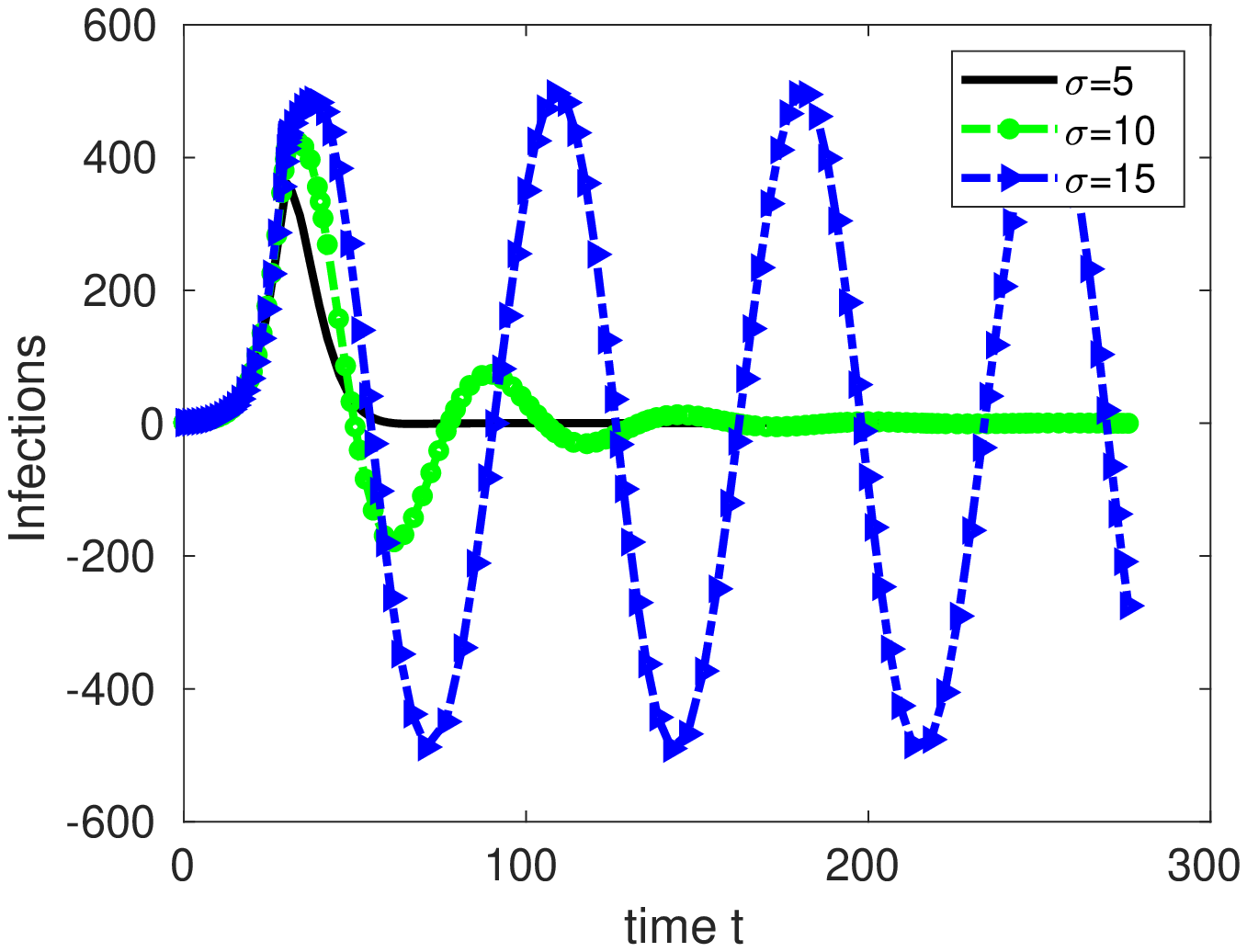}\\
\includegraphics[width=.49\textwidth]{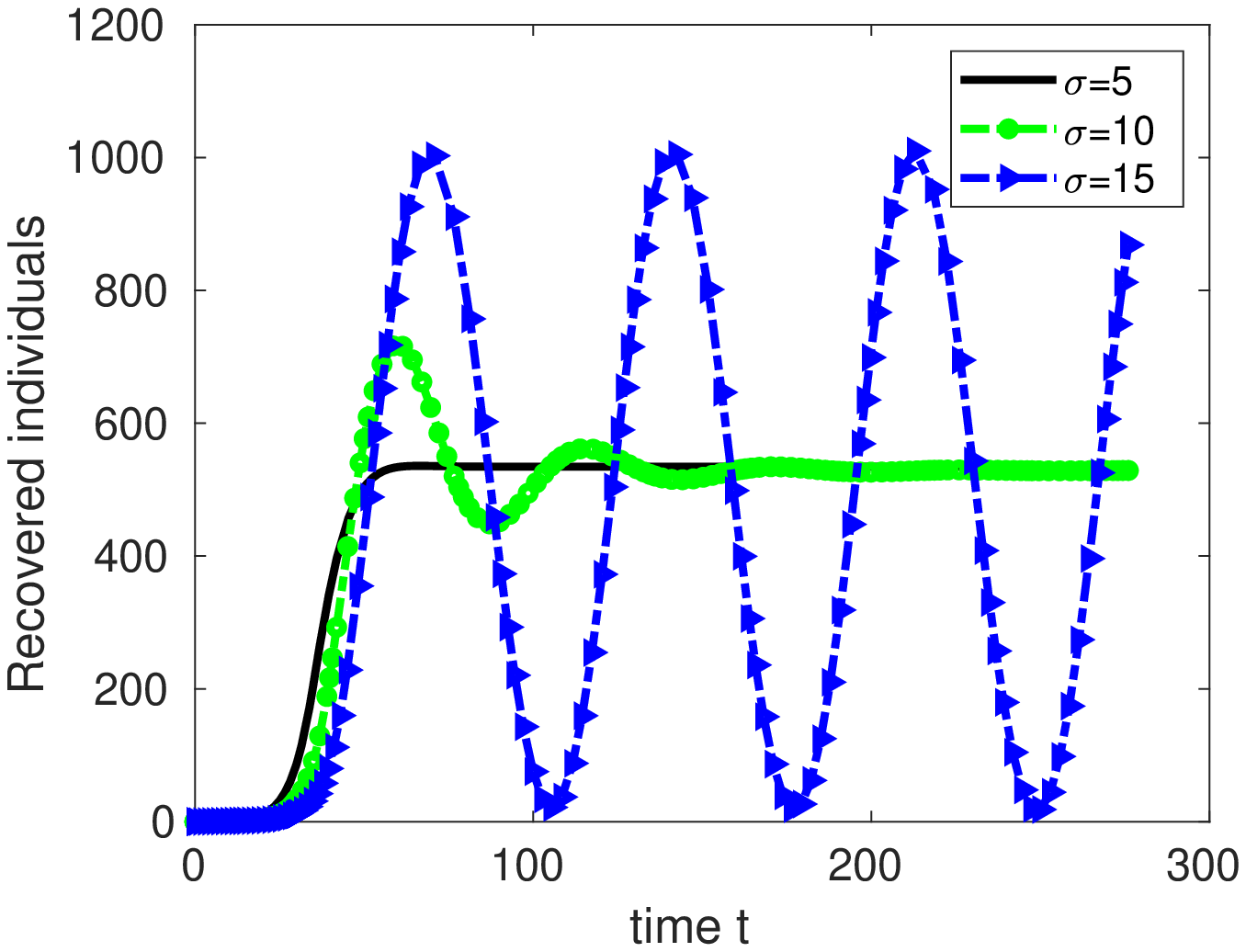}
\includegraphics[width=.49\textwidth]{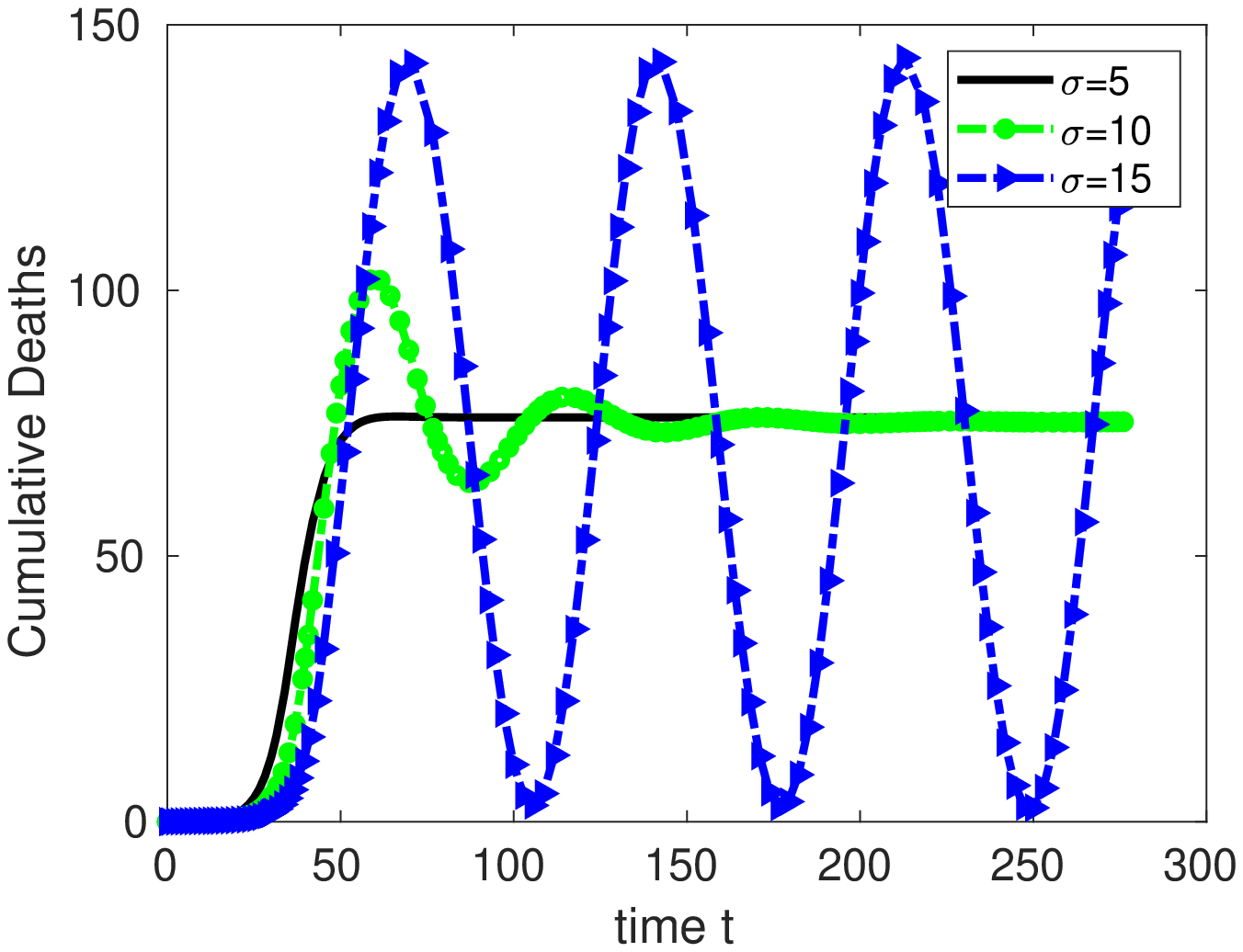} \caption{Evolution of susceptible compartment(top-left), infected compartment (top-right), recovered compartment (bottom-left) and cumulative death (bottom-right) for different delay values for $\phi_d=1/80$ and $\phi_r=28/320$.}\label{fig:ODE_periodic}
\end{figure}

\subsection{1D PDE Model}
In this example, we follow basic setup inspired by the one-dimensional example introduced in \cite{VLABHPRYV2020Due} and also performed in \cite{grave2020adaptive}. We will examine the behavior of the solution under various conditions, as well as the validity of the stability bound for the partial differential equation.
\subsubsection{Problem Setup}
\begin{figure}\centering
  \includegraphics[width=.7\textwidth]{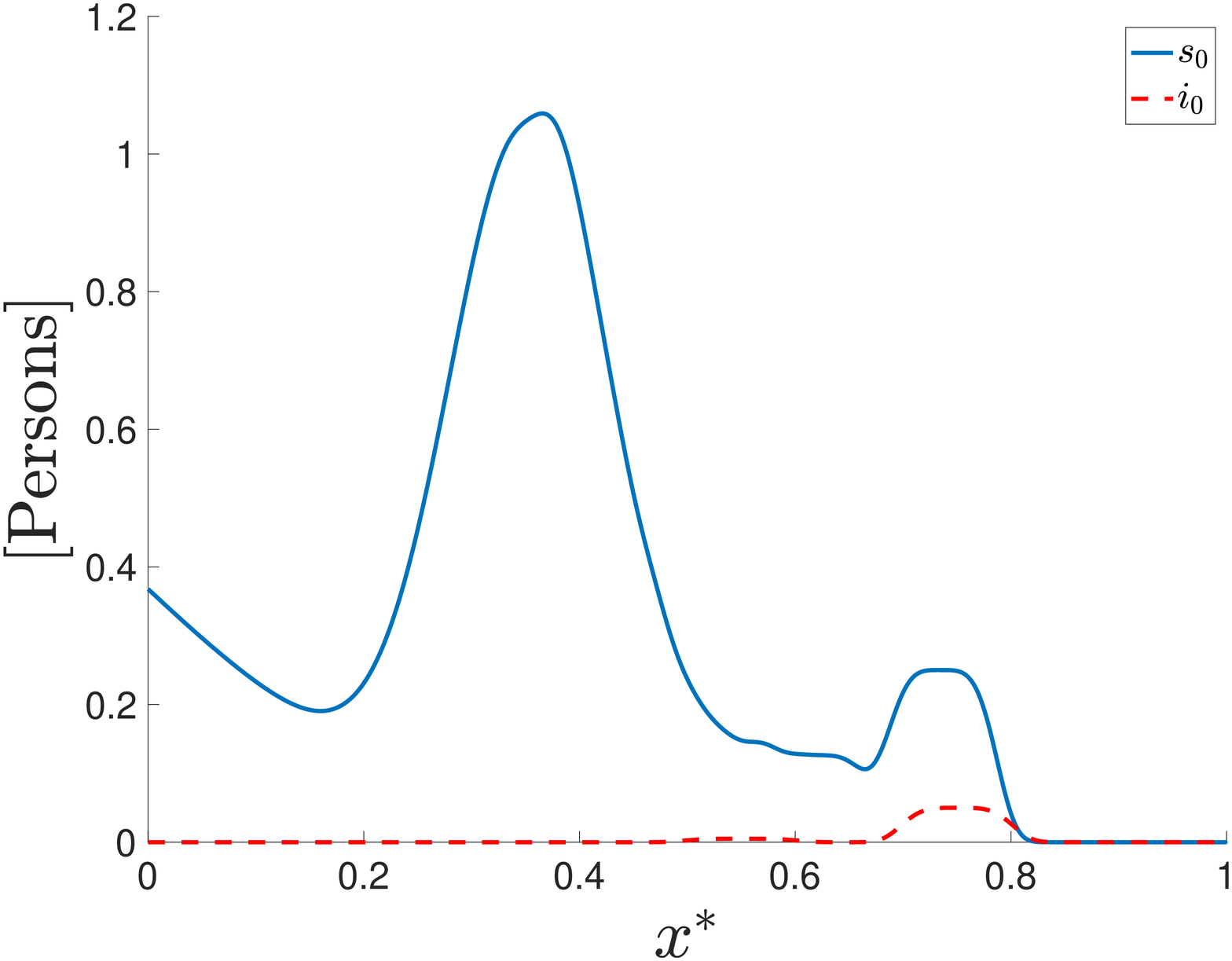} \caption{Initial conditions for the 1D PDE model. Qualitatively, the setup represents a population distribution with one major population center and one lesser population center, with an initial outbreak centered in the lesser population center.}\label{fig:InitialValues}
\end{figure}

For the initial conditions, we set $s(x^*,0) = s_0(x^*)$ and $e(x^*,0)=e_0(x^*)$ as follows
\begin{align}\label{susceptInit}
 s_0(x^*)&= e^{-(x^*+1)^4} + e^{-\frac{(x^*-.35)^2}{1e-2}} + 
 \frac{1}{8}\left(e^{-\frac{(x^*-.62)^4}{1e-5}} +
  e^{-\frac{(x^*-.52)^4}{1e-5}} + 
 e^{-\frac{(x^*-.42)^4}{1e-5}}\right) + 
 \frac{1}{4} e^{-\frac{(x^*-.735)^4}{1e-5}}, \\
	i_0(x^*) &= \frac{1}{20}e^{-\frac{(x^*-.75)^4}{1e-5}} +  \frac{1}{200}e^{-\frac{(x^*-.55)^4}{1e-5}}. \label{exposedInit}
\end{align}
\noindent These conditions are plotted in Fig. \ref{fig:InitialValues}. Qualitatively, they correspond to a population distribution with one major population center, one moderate population center, and one lesser population center, with an initial outbreak centered in the lesser population center.
\par For the parameters, we use the values shown in Table \ref{tab:1DParametertTable}. We discretize in space over the unit interval with $\Delta x= 1/2000$, and advance in time using the BDF2 scheme with $\Delta t =.25$. It was demonstrated in \cite{VLABHPRYV2020Due} that this spatiotemporal discretization resolves all dynamics satisfactorily. We note also that our choice of time-step ensures that we may use previously computed solutions for our delay terms, and there is no need for interpolation \cite{BZ03}.
\par We run the simulation for $t=267$ days for $\sigma=5,\,10,\,15,\,$ and $20$. We seek to examine the effect of $\sigma$ on contagion, but also on the efficacy of public health interventions. To this end, for each value of $\sigma$ we run the full time interval both with and without lockdown measures. For the case with no lockdown, we use the parameters shown in Table \ref{tab:1DParametertTable} over the entire time interval. For the case with lockdowns, at $t=140$ we multiply all diffusive terms $\nu$ by $1/2$, simulating restricted mobility, and contact terms $\beta$ by 1/4, corresponding to measures such as bar closures, mask wearing etc. We note that the problem setup is designed to resemble the ODE simulations in the preceding subsection. 
\subsubsection{Effect of lockdowns}
In Figs. \ref{fig:PlotSusceptible}, \ref{fig:PlotInfected}, \ref{fig:PlotRecovered}, \ref{fig:PlotDeceased} we show the total (i.e., integrated in space) susceptible, infected, recovered, and deceased compartments in time, respectively for each value of $\sigma$ and lockdown configuration. We see that longer $\sigma$ is associated with higher infective peaks, however cumulative deaths, long-term, are similar for all $\sigma$. This is consistent with our expectations, as reducing infective peaks does not necessarily correspond to fewer cases overall. 
\par More interesting is the observed effect of $\sigma$ on lockdown efficacy. Referring to Fig. \ref{fig:PlotDeceased}, the results show that longer $\sigma$ leads to a noticeable lag in the delay compartment; indeed, one begins to see a decrease in mortality as a result of the lockdown at a later date for larger $\sigma$. In the plot of the infected compartment in Fig. \ref{fig:PlotInfected}, the impact of lockdowns appears immediate. This is indeed what we expect, as the definition of this compartment includes pre-symptomatic patients; thus, while the effects are immediate by this definition, other indicators, such as the aforementioned deceased compartment, will lag in proportion to $\sigma$.
\par The delay $\sigma$ appears to not only affect the time at which the effect of lockdowns appear, but also the sharpness of such effects. While for larger $\sigma$, the deceased compartment begins to decrease later as compared to smaller values, once this decrease begins, it occurs much more rapidly. This is particularly visible in the infected compartment shown in Fig. \ref{fig:PlotInfected}; around $t=175$, the total number of infections is indeed \textit{larger} for smaller values of $\sigma$ .
\par An important dynamic that we notice for large $\sigma$ is the emergence of non-physical behavior, similar to those observed in the ODE case. This is particularly apparent for the case of $\sigma=20$, where we observe the infected compartment becoming negative, and decreases in the recovered and deceased compartments, which should be monotonic. While this behavior is non-physical, it is not mathematically inconsistent with the model behavior and does not represent instability as such. Indeed, to guarantee positivity, more stringent conditions on the relationship between $\phi_r$, $\phi_d$ and $\sigma$ are likely required. 

\begin{figure}\centering
  \includegraphics[width=.7\textwidth]{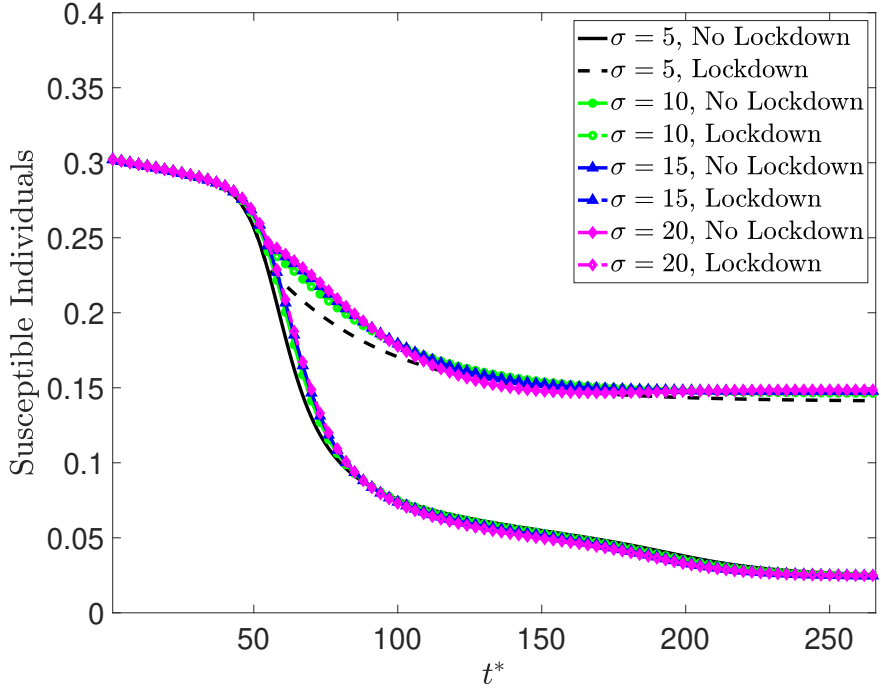} \caption{Total susceptible for $\phi_r$=1/32, $\phi_d$=3/640. We have stable and monotonic behavior in this compartment across all different cases. A lag is observed, as expected, for the different values of $\sigma$.}\label{fig:PlotSusceptible}
\end{figure}

\begin{figure}\centering
  \includegraphics[width=.7\textwidth]{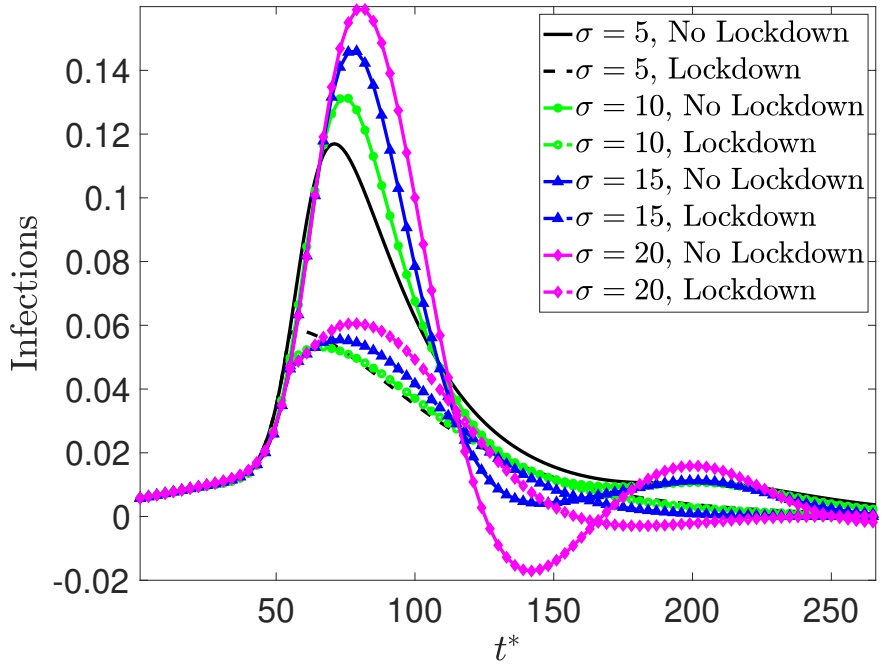} \caption{Total infected for $\phi_r$=1/32, $\phi_d$=3/640. A longer $\sigma$ is associated with more infections, but also a more dramatic decrease in infection after lockdowns are initiated. The impact of lockdowns in the infected compartment is visible more immediately, with the lag effect being more pronounced in the deceased compartment. We also observe clearly the non-physical behavior for $\sigma=20$ in both cases, with the infected compartment becoming negative.}\label{fig:PlotInfected}
\end{figure}

\begin{figure}\centering
  \includegraphics[width=.7\textwidth]{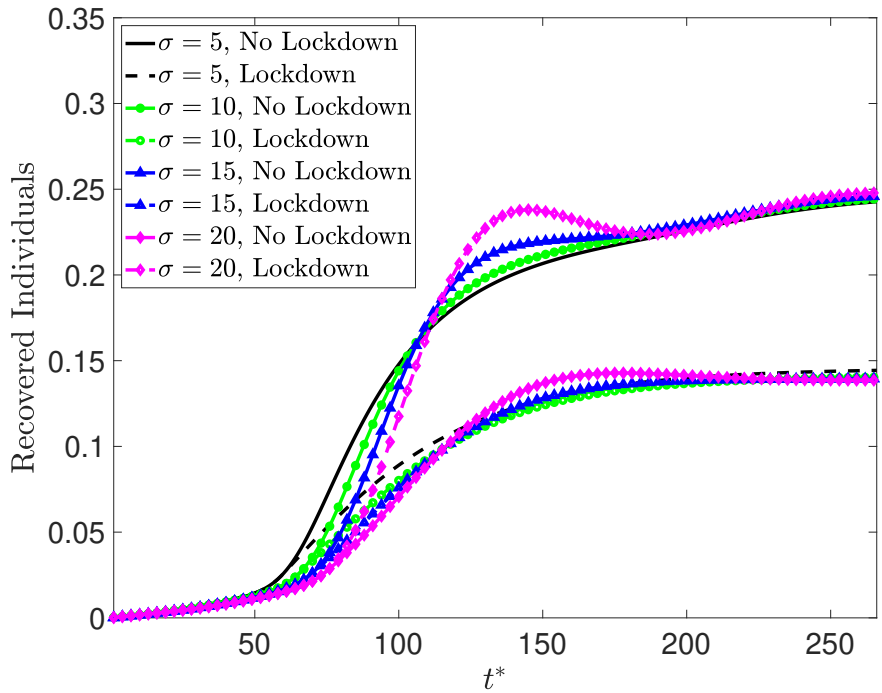} \caption{Total recovered for $\phi_r$=1/32, $\phi_d$=3/640.  Each case is stable, but we observe nonphysical behavior as the deceased compartment for $\sigma=20$, with both cases demonstrating noticeable non-monotonicity. The smaller amount of recovered individuals in the lockdown cases is explained by the reduced contagion overall.}\label{fig:PlotRecovered}
\end{figure}

\begin{figure}\centering
  \includegraphics[width=.7\textwidth]{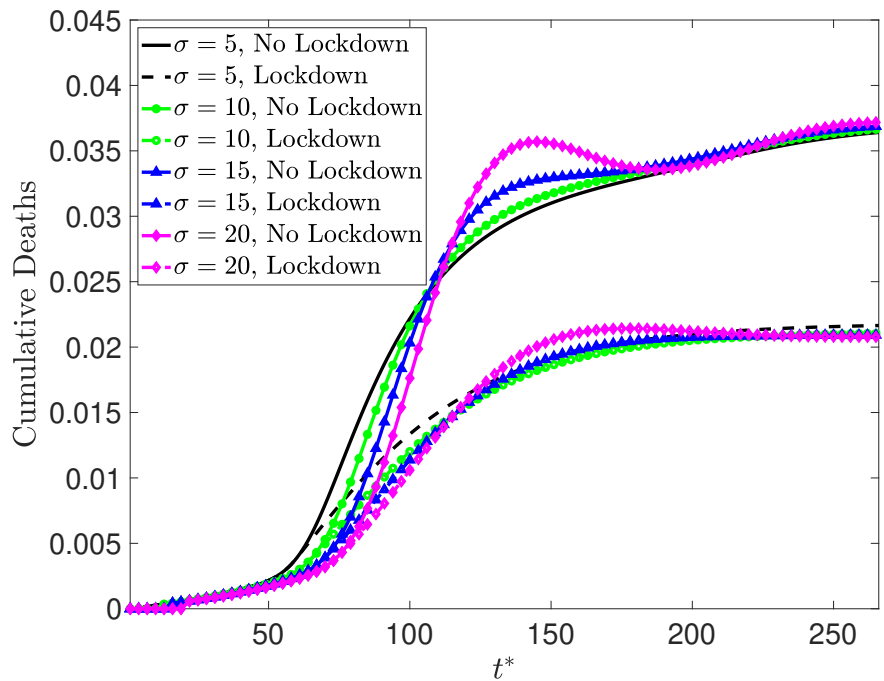} \caption{Total deceased for $\phi_r$=1/32, $\phi_d$=3/640. We see a longer $\sigma$ leads to more fatalities, as well as delaying the efficacy of public health measures. However, while the effects of intervention are delayed, we note that, once visible, their impact occurs more suddenly. We observe non-physical behavior for $\sigma=20$, showing that the model may exhibit non-physical behaviors for larger $\sigma$. }\label{fig:PlotDeceased}
\end{figure}

\begin{figure}\centering
  \includegraphics[width=.7\textwidth]{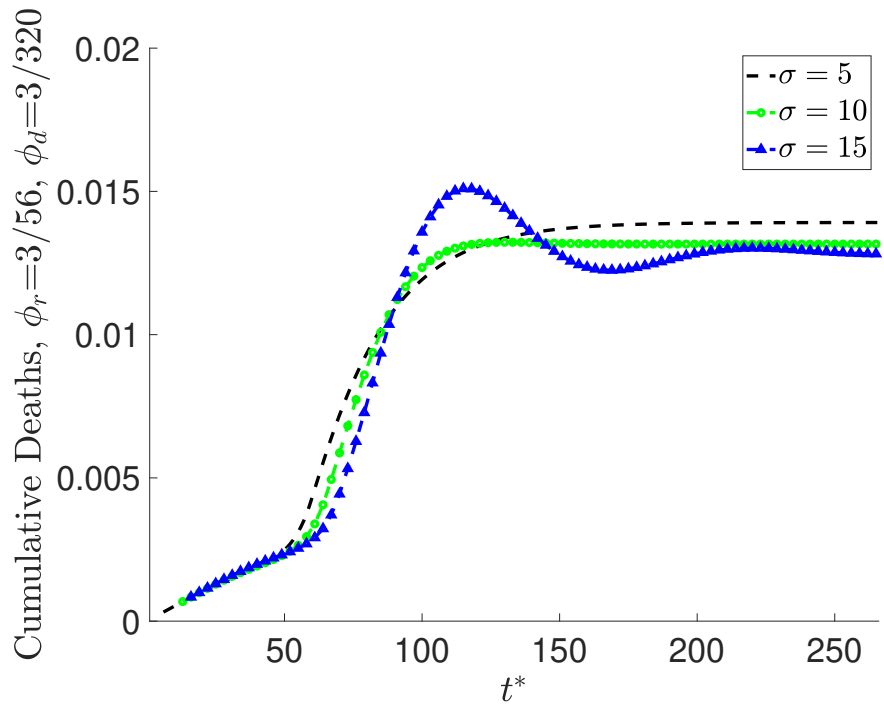} \caption{Total deceased for $\phi_r$=3/56, $\phi_d$=3/320. We see that with this choice of $\phi$ the equations become noticeably nonphysical for $\sigma=15$, with large decreases in the deceased compartment. However, we nonetheless observe stability for all $\sigma$. }\label{fig:Plot1480}
\end{figure}

\begin{figure}\centering
  \includegraphics[width=.7\textwidth]{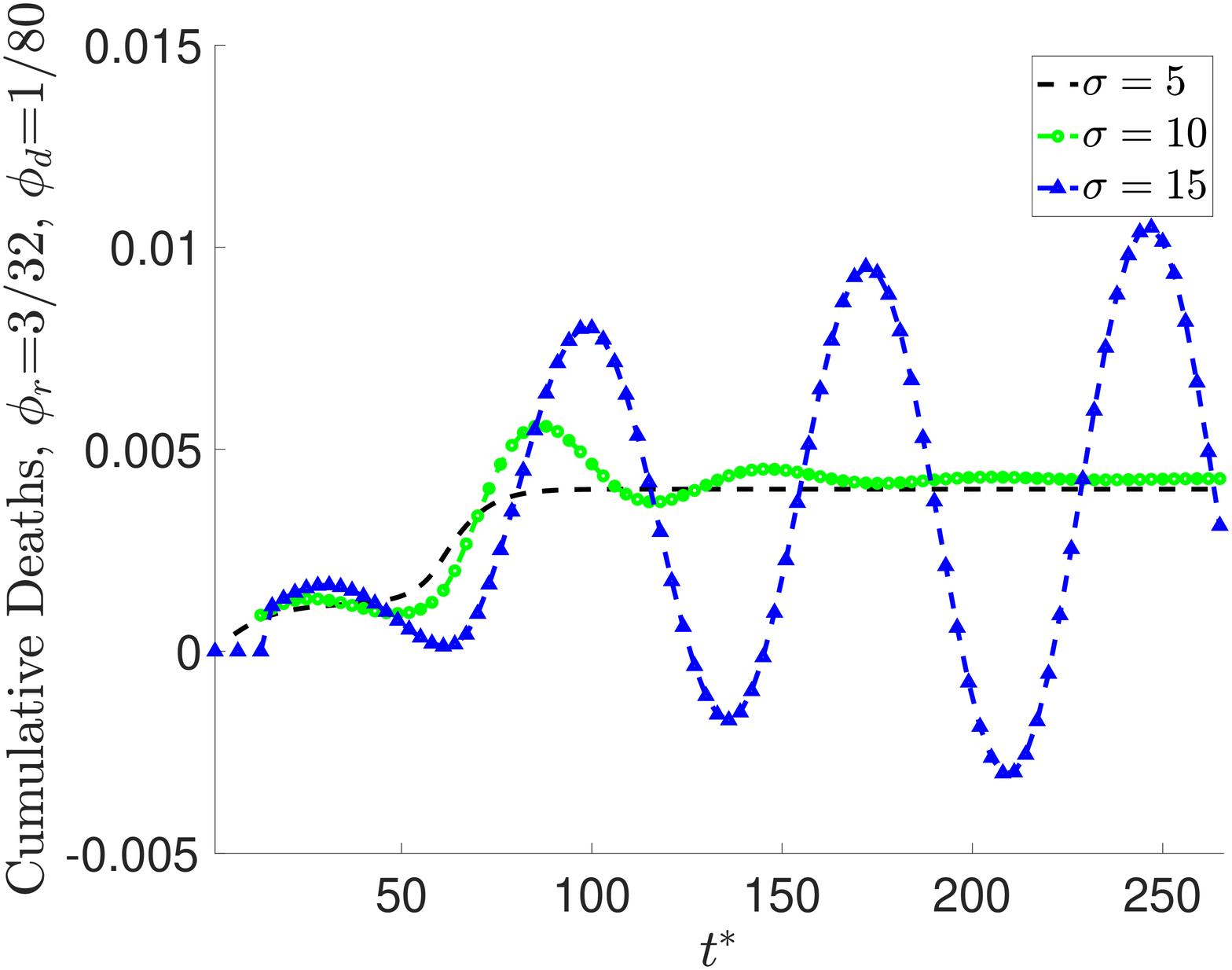} \caption{Total deceased for $\phi_r$=3/32, $\phi_d$=1/80. We see that with this choice of $\phi$ the equations are stable and physical for $\sigma=5$, nonphysical but stable for $\sigma=10$, and unstable for $\sigma=15$, as predicted by the stability condition.}\label{fig:Plot860}
\end{figure}

\subsubsection{Stability of the PDE Model}
We now examine the validity of the bound $(ii)$ in Theorem 3.1, derived for the ODE model, for the PDE model. As mentioned in the analysis section, we expect the results to hold identically in this case. In the preceding, we observe that for the parameters in Table \ref{tab:1DParametertTable}:
	$$\frac{1}{32}+\frac{3}{640} \approx .0359 < \frac{\pi}{2\sigma}$$
 for $\sigma$=5, 10, 15, 20, thus satisfying the condition for all considered $\sigma$. While we see see non-physical behaviors for both cases of $\sigma=20$, the solution does indeed remain stable. Such behaviors appear to be independent of both time-step size and time-integration scheme, and represent the model behavior. Thus, to guarantee physical behavior, stability is necessary but not sufficient.

\par For $\phi_r=3/56$ and $\phi_d=3/320$:
	$$\frac{3}{56}+\frac{3}{320} \approx .0629 < \frac{\pi}{2\sigma},$$

\noindent again satisfying the stability condition for all $\sigma$, we again see further numerical validation of the system stability. As shown in Fig. \ref{fig:Plot1480}, all values of $\sigma$ are stable. $\sigma=5$ and $\sigma=10$ both show physical behavior, avoiding oscillations and large decreases in the deceased compartment. While $\sigma=15$ is stable, the behavior is clearly nonphysical, and we see oscillations and decreases in the deceased compartment.
\par For $\phi_r=3/32$ and $\phi_d=1/80$:
	$$\frac{3}{32}+\frac{1}{80} \approx .1062 < \frac{\pi}{2\sigma}$$
for $\sigma=5,\,10$ but not for $\sigma=15$. The behavior of $\sigma=5$ is both physical and stable, and $\sigma=10$ is stable but nonphysical, as shown in Fig. \ref{fig:Plot860}. $\sigma=15$ violates the stability bound slightly, and we observe unstable behavior, with large increasing oscillations. This establishes not only the validity of the stability bound, but also its strictness. These results confirm the analysis, showing that the stability bound $(ii)$ in Theorem 3.1, holds for the PDE model. However, as the results also show clearly, stability does not imply a physical solution, as one still may observe negative values in the infected compartment and non-monotonic behavior in the recovered and deceased compartments. For sufficiently small values of $\sigma$ in comparison to $\phi_d$, $\phi_r$, we observe numerically that one may expect physical behavior in the solution. A positivity condition, a much stronger condition than stability, is required to guarantee physical solution behavior; this likely depends on the initial data and is an interesting direction for future work.
\subsubsection{Relationship with ODE solutions} In this case, we observe that the ODE provides a good approximation to the space-integrated PDE in 1D. This is expected from our analysis, as in this case we have taken $A=0$ and the variation in population density, while present, is not extreme. As mentioned previously, ODE models are obviously much less demanding than PDE models from the point of view of both implementation and simulation time \eli{as well as from analytical/stability analysis point of view}. For this reason, if possible, their use may be preferred over PDEs in certain contexts. These simulations confirm that, near equilibria, for small values of $A$, and relatively small variation in population density, the ODE may provide a surrogate for the PDE, if spatial differences are not considered important and one wishes to instead consider the population independently of space. \eli{On the other hand, the PDE model provides more reliable forecast when these factors are important, due to the presence of the spatial information.}

\subsection{Lombardy Simulation}
Our final numerical test models the outbreak of COVID-19 in the region of Lombardy, Italy. This test case was first used to validate a SEIRD model in \cite{VLABHPRYV2020}, in which the simulation results showed good agreement with the measured data. It was then examined in further detail in \cite{VLABHPRYV2020Due}, where other aspects of the problem, including the model sensitivity to diffusion, were \eli{investigated}. We will not discuss such aspects of this model here, as the focus of the present work is on the delay differential equation model. Hence, for a more detailed discussion of the aforementioned results, we kindly refer the reader to \cite{VLABHPRYV2020, VLABHPRYV2020Due}.  
\par For the spatial discretization, we utilize an unstructured triangular mesh with 41625 elements. For the temporal discretization, we use the Backward-Euler method with a time step $\Delta t=.25$ days. We solve the nonlinear problem at each time step using a Picard-style linearization, and the corresponding linear systems are solved using the GMRES algorithm with a Jacobi-style preconditioner. At all boundaries, we assign no-flux boundary conditions, corresponding to total isolation. The parameter values are reported in Table \ref{tab:2DParametertTable}. We note that these differ from those shown in \cite{VLABHPRYV2020Due, VLABHPRYV2020}; this is primarily due to differences resulting from the fact that, in the current delay model, asymptomatic individuals are now considered within the `infected' compartment.

\begin{table}
\begin{center}

\resizebox{\textwidth}{!}{%
\begin{tabular}{ |c|c|c|c|c|c|c|c| } 
\hline
Parameter   &  Units  & Feb.27-Mar.9 & Mar.9-22 & Mar.22-28 &Mar.28-May3 &May3- \\
\hline\hline
$\sigma$  & Days  & 8  & 8 & 8 &8 & 8 \\ \hline
$\beta_e$ & Persons$^{-1}\cdot$Days$^{-1}$ & 3.75$\cdot 10^{-4}$ &  3.11$\cdot 10^{-5}$ &  2.0625$\cdot 10^{-5}$ &  1.5$\cdot 10^{-5}$ & 2.75$\cdot 10^{-5}$ \\ \hline
$\beta_i$ & Persons$^{-1}\cdot$Days$^{-1}$ & 3.75$\cdot 10^{-4}$ &  3.11$\cdot 10^{-5}$ &  2.0625$\cdot 10^{-5}$ &  1.5$\cdot 10^{-5}$ & 2.75$\cdot 10^{-5}$ \\ \hline
$\phi_r$  & Days$^{-1}$  & 3/64  & 3/64 & 3/64 &3/64 & 3/64 \\ \hline
$\phi_d$  & Days$^{-1}$  & 3/320  & 3/320 & 3/320 &3/320 & 3/320 \\ \hline
$\overline\nu_s $  & km$^{2}\cdot$ Persons$^{-1}\cdot$Days$^{-1}$   & 4.35$\cdot 10^{-2}$ & 1.98$\cdot 10^{-2}$ & 0.9$\cdot 10^{-2}$ & 0.75$\cdot 10^{-2}$ & 2.175$\cdot 10^{-2}$ \\ \hline
$\overline\nu_i $  & km$^{2}\cdot$ Persons$^{-1}\cdot$Days$^{-1}$  & 2.175$\cdot 10^{-2}$ & 1.$\cdot 10^{-2}$ & 0.45$\cdot 10^{-2}$ & 0.325$\cdot 10^{-2}$ & 1.0625$\cdot 10^{-2}$ \\ \hline$\overline\nu_r $  & km$^{2}\cdot$ Persons$^{-1}\cdot$Days$^{-1}$  & 4.35$\cdot 10^{-2}$ & 1.98$\cdot 10^{-2}$ & 0.9$\cdot 10^{-2}$ & 0.75$\cdot 10^{-2}$ & 2.175$\cdot 10^{-2}$ \\ \hline
$\overline{A} $  & Persons  & 1.0$\cdot 10^{3}$ & 1.0$\cdot 10^{3}$ & 1.0$\cdot 10^{3}$ &1.0$\cdot 10^{3}$ & 1.0$\cdot 10^{3}$ \\ \hline

%
%

\end{tabular}}
\caption{Parameter values for the 2D Lombardy simulations. The values change with date as these correspond to various restrictions (or relaxtions) taken by the government during the epidemic. We note that these parameters are not normalized in space.}
  \label{tab:2DParametertTable} 
\end{center}
\end{table}

\begin{figure}\centering
  \includegraphics[width=.7\textwidth]{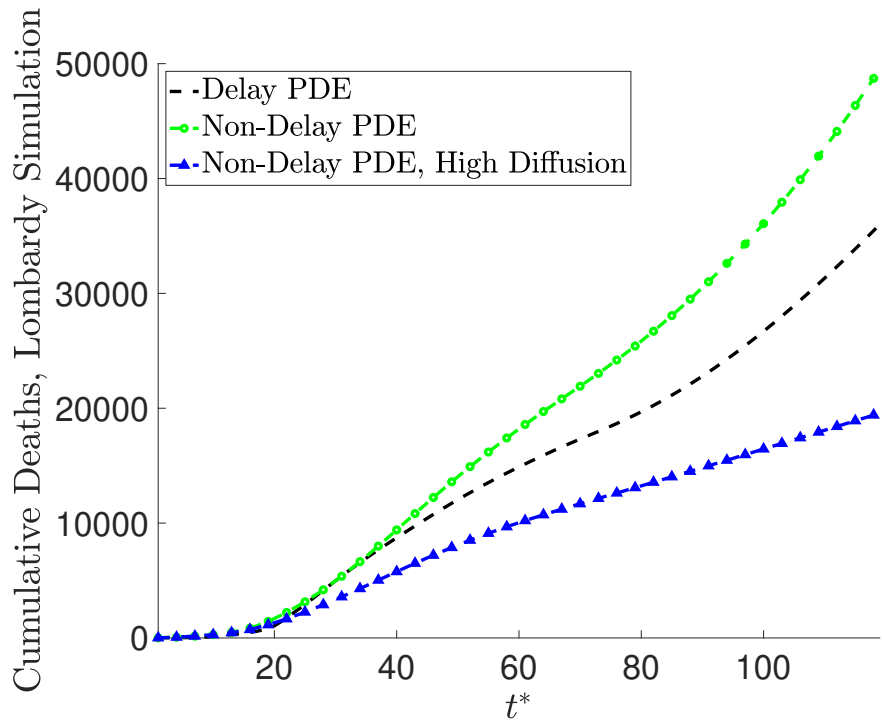} \caption{Deceased individuals for the Lombardy test case. We show the delay PDE results together with two other non-delay model formulations, validated against measured data. We see that the results exhibit similar qualitative behavior, establishing the potential of the delay PDE model to produce realistic simulation results in time and space.}\label{fig:PlotLomb}
\end{figure}

\begin{figure}\centering
  \includegraphics[width=.8\textwidth]{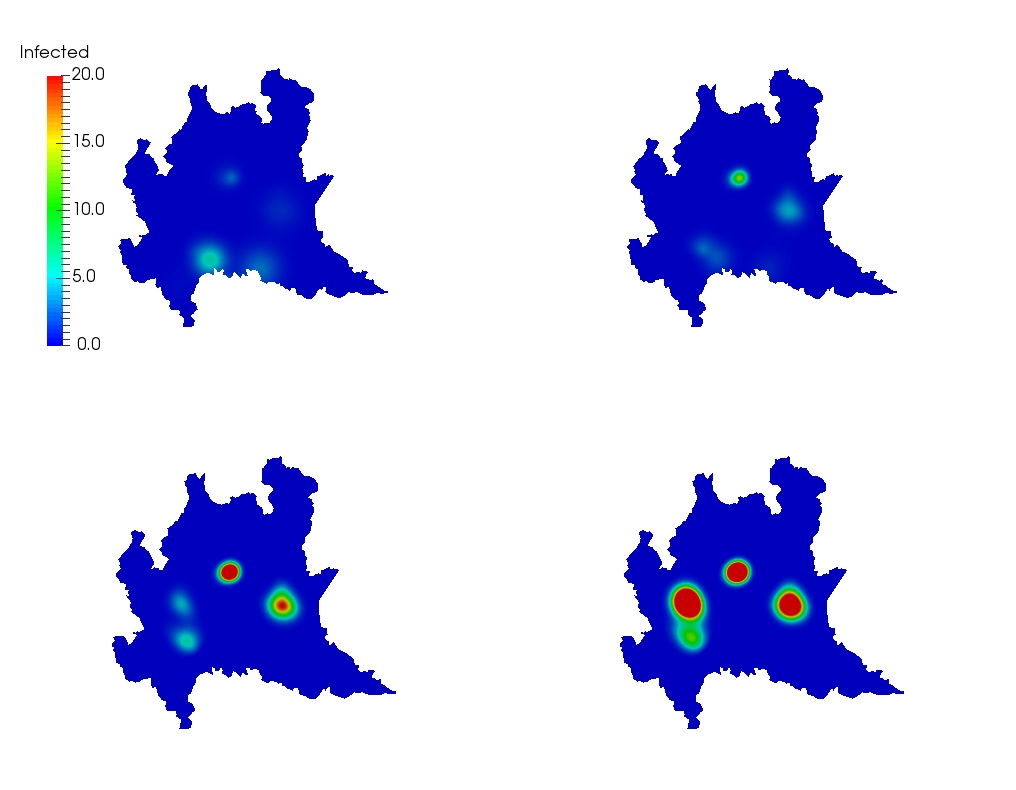} \caption{Evolution of the epidemic in Lombardy using the Delay PDE. Clockwise, from top left: Day 1, 5, 10, 20. The outbreak begins with a clusters of cases in the south of the region, before moving north into the region's large cities.}\label{fig:EpidemicEvolution}
\end{figure}

\par In Fig. \ref{fig:PlotLomb}, we show how the results compare to two formulations of the problem shown in \cite{VLABHPRYV2020Due}: the `baseline' case, which features the same diffusive parameters used here, and the case with doubled diffusion. Note that we focus here on the deceased compartment, rather than the infectious compartment, due to the differing definitions `infected' between the two models. We see similar qualitative behavior to the baseline case, with an $R^2$ correlation coefficient between the of 99.8\%. Over the first ~40 days, the behavior is nearly identical, with the results beginning to differ somewhat further in time. The agreement with the baseline case, rather than the high-diffusion case from \cite{VLABHPRYV2020Due}, suggests that the delay model does not interfere with the diffusive behavior generally.  
\par In Fig. \ref{fig:EpidemicEvolution}, we observe the evolution of the epidemic in space. Starting from the top-left and moving clockwise, we show the density of infected individuals on days 1, 5, 10, and 20. What begins as a small cluster of cases in the south of the region moves northward, into the region's large cities (Milan, Bergamo, and Brescia). While the initially affected regions improve rapidly, in the large cities the epidemic continues to grow. This is consistent with the observed data, and with the simulations using non-delay models shown in \cite{VLABHPRYV2020}.
\par In order to assess the stability bound, we performed a second numerical simulation. We note that, strictly speaking, as $A$ is nonzero in this case, the assumptions of the stability bound do not strictly hold. However, in other simulations (not shown), the qualitative behavior of the model with $A=0$ was similar to that shown here, and so we may expect the bound to hold heuristically. To better observe the stability behavior, we set $\beta_i$, $\beta_e = 3.75\cdot 10^{-4}$ from Feb. 27-Mar. 4, and $\beta_i$, $\beta_e = 3.11\cdot 10^{-5}$ for the remainder of the simulation. We then let\footnote{The remainder of parameter values are the same as in Table \ref{tab:2DParametertTable}} $\phi_r=1/8$, $\phi_d=1/80$, $\sigma=12$. Hence:
$$ 1/8 + 1/80 =.1375 > \frac{\pi}{2*12}.$$
In accordance with the theory, we do not expect stability for this case. Indeed, this is what we observe. In Fig. \ref{fig:PlotStabLomb}, we plot the total number of active infections for the stable (previously discussed) case, as well as the unstable case. While in the stable case we observe the expected behavior, in which infections remain positive and grow or decrease in response to the pandemic-arresting measures in place, we instead observe large oscillations between positive and negative values for the unstable case. We show this behavior in time and space in Fig. \ref{fig:DelayOscillations}, where one sees the oscillations concentrated in the heavily-affected outbreak zones within the region. The frequency of these oscillations appears to be nonuniform, and dependent on infection concentration. Whether this provides any important information is, unclear, though it is an interesting observation and perhaps worthy of some investigation.

\begin{figure}\centering
  \includegraphics[width=.7\textwidth]{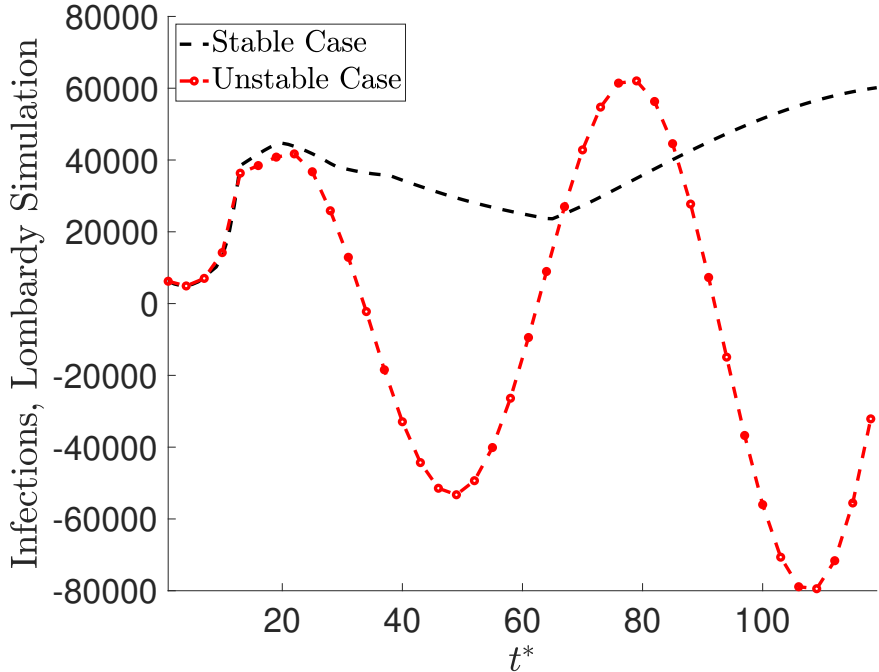} \caption{Active infections for the Lombardy simulations in both the stable and unstable regimes. In the stable regime, we see the expected behavior; infections remain positive and change in response to pandemic-arresting measures. In the unstable regime, we instead see spurious nonphysical oscillations between positive and negative.}\label{fig:PlotStabLomb}
\end{figure}

\begin{figure}\centering
  \includegraphics[width=.8\textwidth]{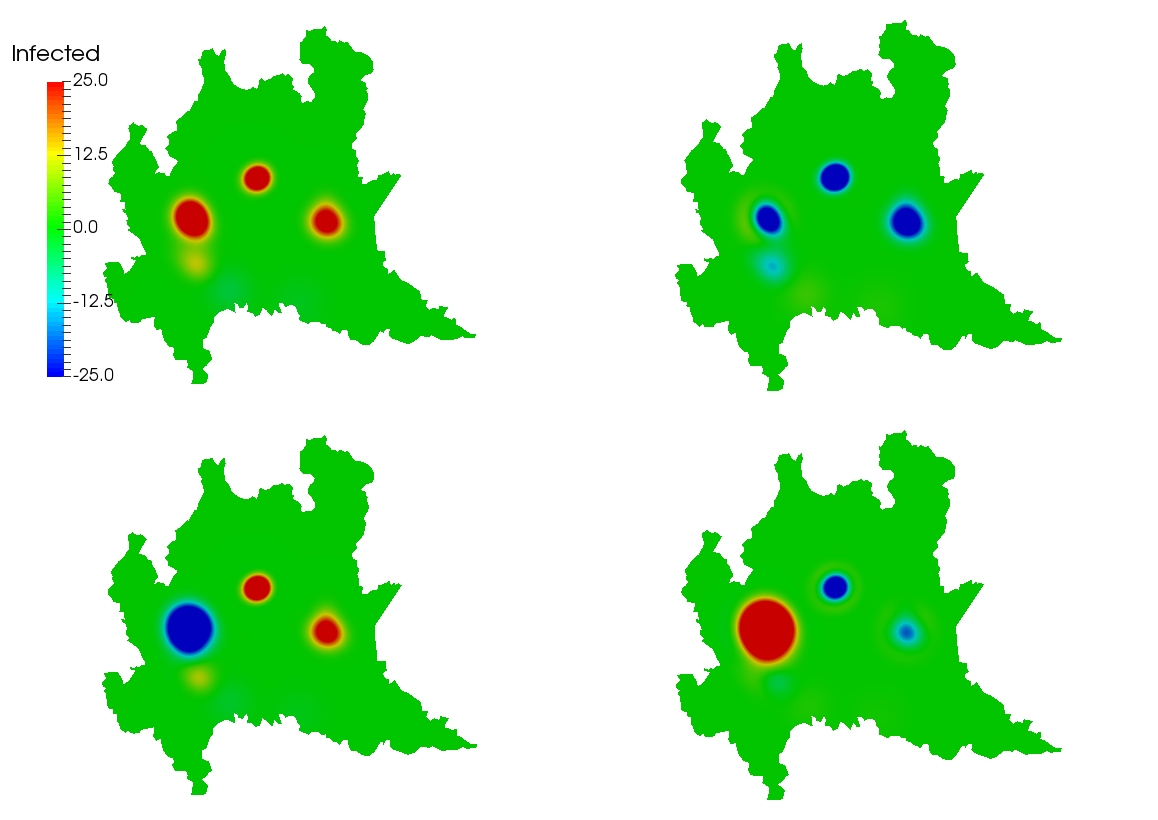} \caption{The simulation of Lombardy in the unstable regime, clockwise from left: days 20, 40, 60, 80. We see interesting oscillatory behavior between positive and negative throughout the region and the primary infectious zones.}\label{fig:DelayOscillations}
\end{figure}

\section{Conclusions}
We have presented a new formulation for epidemic models utilizing delay differential equations in both an ODE and PDE formulation. We have established stability results for the ODE formulation, which were then confirmed with numerical experiments. For the PDE model, we observed interesting dynamics regarding the relationship between the delay time and lockdowns, as well as contagion in general. We further showed, with numerical evidence, that the stability bounds established for the ODE also hold for the PDE, and that in some situations the ODE may a reasonable surrogate for the PDE. We then concluded with a simulation on a realistic problem, in which we showed that the delay PDE can reproduce reality at a reasonable level, by obtaining results similar to non-delay models shown in other work. We also analyzed the spatiotemporal behavior of the unstable regime, finding it produced large oscillatory behavior between positive and negative values within the heavily impacted regions. 
\par There are many worthwhile directions for future work on this model, and the area generally. While we provided theoretical and numerical evidence of stability, we also showed for both the PDE and ODE that stability does not necessarily guarantee physical solution behavior. A stronger result, such as a positivity condition, is needed to provide such a result. Lastly, we have restricted ourselves to the constant delay case as a first step, but the most general and realistic models of this type should incorporate state-dependent delays. Although such models have many theoretical and numerical difficulties, many epidemics and other related phenomena exhibit this type of behavior \cite{MurrayI, MurrayII, BZ03, DGVW95}.


	 \bibliographystyle{plain}
\bibliography{covid19_v1.bib}

\end{document}